\newtheorem{thm}{Theorem}
\newtheorem{lemma}[thm]{Lemma}
\newtheorem{prop}[thm]{Proposition}
\newtheorem{defn}[thm]{Definition}
\newtheorem{clm}[thm]{Claim}
\newtheorem{obs}[thm]{Observation}
\newcommand\cQ{{\mathcal Q}}
\newcommand{\ignore}[1]{}
\title{REV - Majority problems of large query size}
\begin{document}

\author{
D\'aniel Gerbner\thanks{Research supported by the J\'anos Bolyai Research Fellowship of the Hungarian Academy of Sciences.} \thanks{Research supported by the National Research,
Development and Innovation Office NKFIH, grant K116769.}
\and
M\'at\'e Vizer\thanks{Research supported by the National Research, Development and Innovation
Office NKFIH, grant SNN 116095.}}

\date{MTA R\'enyi Institute \\
Hungary H-1053, Budapest, Re\'altanoda utca 13-15.\\
\small \texttt{gerbner@renyi.hu, vizermate@gmail.com}\\
\today}

\maketitle
\begin{abstract}

We study two models of the Majority problem. We are given $n$ balls and an unknown coloring of them with two colors. We can ask sets of balls of size $k$ as queries, and in the so-called General Model the answer to a query shows if all the balls in the set are of the same color or not. In the so-called Counting Model the answer to a query gives the difference between the cardinalities of the color classes in the query.

Our goal is to show a ball of the larger color class, or prove that the color classes are of the same size, using as few queries as possible. In this paper we improve the bounds given by De Marco and Kranakis \cite{DK2015} for the number of queries needed.

\end{abstract}

\section{Introduction}

\indent

We are given $n$ indexed balls - we identify them with the set $[n]=\{1,2,...,n\}$ - as an input, each colored in some way unknown to us, and we can ask subsets of $[n]$, that we call \textit{queries}. A ball $i \in [n]$ is called \textit{majority ball} if there are more than $n/2$ balls in the input set that have the same color as $i$. We call the problem of finding a majority ball (or showing that there is no such ball) the \textit{Majority Problem}. We would like to determine the minimum number of queries needed in the worst case, when our adversary - we will call him Adversary in the following -, who tells us the answers for the queries wants to postpone the solution of the Majority Problem. 


A \textit{model} of the Majority Problem is given by the number of balls $[n]$, the number of colors, the size of the queries (that we will denote by $k$), and the possible answers of Adversary. Sometimes we will use the hypergraph language, so we will speak about the hypergraph of the queries (i.e. the hypergraph, where the vertices are the balls and the edges are the asked queries up to a certain round.).

\vspace{2mm}

The first Majority Problem model, the so-called \textit{pairing model} - when the query size is two, and the answer of Adversary is yes if the two balls have the same color and no otherwise - was investigated by Fisher and Salzberg \cite{FS1982}, who proved that if we do not have any restriction on the number of colors, $\lceil 3n/2 \rceil - 2$ queries are necessary and sufficient to solve the Majority Problem. If the number of colors is two, then Saks and Werman \cite{SW1991} proved that the minimum number of queries needed is $n-b(n)$, where $b(n)$ is the number of $1$'s in the diadic form of $n$ (we note that there are simpler proofs of this result, see \cite{A2004,W2002}). In this paper we deal with two possible generalizations of the pairing model, the \textit{Counting} and the \textit{General} model. Both of them deals with queries of size greater than two.

The first model that generalized the pairing model to larger queries was introduced and investigated by De Marco, Kranakis and Wiener \cite{DKW2011}, then many results appeared in the literature  \cite{B2014,DK2015,EH2015,GKPPVW2015}. We note that for $k \ge 3$ it is possible that in some model one can not solve the Majority Problem for small $n$ or even for any $n$ (see such a model in \cite{GKPPVW2015}).
We also note that there are other possibilities to generalize the pairing model of the Majority Problem, e.g. the Plurality Problem \cite{A2004, ADM2005, CGMY2005, GKPP2013, GLV2017}, some random scenarios \cite{DP2006,DJKKS2007,GSU2016}, 
or investigate the case when Adversary can lie \cite{A2004,BGM2010}, etc.

\vspace{2mm}

\textbf{Structure of the paper} The rest of the paper is organized as follows: in the next section first we define the models and state the known results, then we state our new results. In Section 3 and Section 4 we prove our theorems, and in Section 5 we finish the article with some remarks.

\section{Models and results}
De Marco and Kranakis \cite{DK2015} introduced three different models, and gave upper and lower bounds for all of them. Eppstein and Hirschberg \cite{EH2015} improved three of those  six bounds, here we improve the other three.

In the following models we are given $n$ indexed balls, colored with two colors, and we ask queries of size $k$ ($k \ge 2$) that we denote by $Q$. We will give each model an abbreviation that we indicate after its name. If the abbreviation is $X$, then we will denote by $A(X,k,n)$ the number of queries needed to ask in the worst case of that model. At each model we mention the papers that deal with that model and write the best result that was known.

\subsection{Models, known results}

\vspace{3mm}

\textsc{Counting Model = CM}, \cite{DK2015,EH2015}:

\vspace{5mm}

$\bullet$ $\mathbf{Answer:}$ is a number $i \le k/2$ such that the query has exactly $i$ balls of one color and $k-i$ of the other.
(No indication is provided to us about the colors of each type.)

\vspace{3mm}

\begin{thm}\label{eh} (Eppstein, Hirschberg \cite{EH2015}, Theorem 2, Theorem 3, Theorem 4) For all $2 \le k \le n$ we have

$$f(n,k) \le A(CM,k,n) \le \frac{n}{\lceil \frac{k}{2} \rceil}  + O(k),$$

where

\begin{displaymath}
f(n,k):=
\left\{ \begin{array}{l l}
\lceil \frac{n-1}{k-1} \rceil & \textrm{if $k$ is odd,}\\
\frac{n}{k-1} - O(n^{1/3}) & \textrm{if $k$ is even.}\\
\end{array}
\right.
\end{displaymath}

\end{thm}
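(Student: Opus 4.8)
The plan is to treat the coloring as a vector $x\in\{-1,+1\}^n$, so that a query on a $k$-set $S$ returns exactly $\bigl|\sum_{i\in S}x_i\bigr|$ (up to the floor/ceiling convention in the statement), and ``solving the Majority Problem'' means determining $\mathrm{sign}\!\left(\sum_{i=1}^n x_i\right)$ together with an explicit index $j$ having $x_j$ of that sign, or certifying the total is $0$. The two halves of the theorem are an algorithm for the upper bound and an Adversary argument for the lower bound; I would treat odd and even $k$ separately in the latter, since for odd $k$ every $k$-set has odd (hence nonzero) imbalance while for even $k$ the imbalance can be $0$.

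\textbf{Upper bound.} I would use a Boyer--Moore-flavoured ``merge/elimination'' algorithm that processes the balls in consecutive blocks of size about $\lceil k/2\rceil$, maintaining a \emph{resolved pool} with its signed imbalance $\sigma$ (known up to the global colour flip the problem is invariant under) and, when $\sigma\neq 0$, a witness ball of the majority class. To fold in the next block $F$: first query $F$ together with $\lfloor k/2\rfloor$ already-understood padding balls, which reveals $\bigl|\sum_{i\in F}x_i+r\bigr|$ for a known $r$ and hence the magnitude of $F$'s imbalance; a second query sharing $k-1$ balls with the first but swapping one padding ball for a reference ball of known class then fixes the sign, after which $\sigma$ and the witness are updated. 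Naively this is two queries per block, and the content of this direction is an amortized analysis — reusing a large accumulated $\sigma$ as padding that forces a sign with no extra query whenever $|\sigma|$ is big enough, and handling the $O(1)$ blocks where $|\sigma|$ is too small separately — together with an $O(k)$ start-up that calibrates a first reference ball (query a $k$-set, keep swapping one ball until the reported imbalance changes, pinning that ball's class relative to the rest) and an $O(k)$ clean-up at the end, bringing the total down to $n/\lceil k/2\rceil+O(k)$. The fiddly point is the bookkeeping of which balls' colours are known exactly versus only in aggregate, and keeping enough known-colour padding on hand.

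\textbf{Lower bound, $k$ odd.} Let the Adversary answer every query with imbalance exactly $1$ (always attainable when $k$ is odd). The key structural fact: flipping all colours inside a connected component of the current query hypergraph $H$ changes no answer, because every queried set lies entirely inside one component and $\bigl|\sum_{i\in S}x_i\bigr|$ is invariant under flipping the colours of exactly those balls in $S$. A $k$-uniform hypergraph on $n$ vertices with fewer than $\lceil(n-1)/(k-1)\rceil$ edges is disconnected (each edge enlarges the spanned vertex set by at most $k-1$, spanning-forest style), so while the algorithm has asked fewer queries, $H$ has at least two components; I would have the Adversary keep a consistent coloring for which flipping one suitably chosen component reverses $\mathrm{sign}\!\left(\sum_i x_i\right)$ or turns a strict majority into a tie — a parity bookkeeping on the component sizes handles the ``$-1$'' and the boundary cases — giving two consistent colorings that demand different outputs, so the algorithm cannot yet have finished.

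\textbf{Lower bound, $k$ even, and the main obstacle.} For even $k$ the Adversary cannot answer ``imbalance $1$''; the substitute is to answer ``imbalance $0$'' whenever possible, which imposes the rigid constraint $\sum_{i\in S}x_i=0$ on each queried $S$. What I expect to be the crux is quantitative: one must show that after $t$ such balanced queries there are still two consistent $\pm1$-colorings demanding different outputs, unless $t\ge n/(k-1)-O(n^{1/3})$. The feasible set is $\{x\in\{-1,1\}^n:\sum_{i\in S_\ell}x_i=0,\ \ell=1,\dots,t\}$, and the task is to control, from $t$ and the structure of the $S_\ell$, whether every feasible $x$ shares the same $\mathrm{sign}\!\left(\sum_i x_i\right)$ and the same witness; the $n^{1/3}$ slack should emerge as the error term of a discrepancy/counting estimate to the effect that a $k$-uniform set system which pins down a $\pm1$ assignment this tightly must be close to a near-perfect packing and hence have at least $n/(k-1)-O(n^{1/3})$ sets. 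I would isolate this as a standalone lemma and prove it first; given it, the even case mirrors the odd case with the coordinates left free by the balanced equations playing the role of the flippable component. This even-$k$ lemma, with its precise $n^{1/3}$ loss, is the step I expect to require the most work.
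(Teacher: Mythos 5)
You should first note a mismatch with the task itself: the paper does not prove Theorem \ref{eh} at all — it is quoted as a known result of Eppstein and Hirschberg \cite{EH2015} (their Theorems 2--4) — so there is no in-paper proof to compare against, and your proposal has to stand on its own. On those terms it has genuine gaps. The most concrete one is in the odd-$k$ lower bound: the Adversary strategy ``answer every query with imbalance exactly $1$'' is not in general consistent. For $k=3$, if the algorithm asks all ten $3$-subsets of a fixed $5$-element set (only $10$ queries, which is fewer than $\lceil (n-1)/2\rceil$ once $n\ge 22$), then no $2$-coloring gives every such triple a $2$--$1$ split, since among $5$ balls some $3$ share a color; so this Adversary contradicts himself well before the claimed bound. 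The component-flipping observation itself is sound (and is indeed the spirit of the Eppstein--Hirschberg argument, which is carried out in the stronger partition-revealing model), but the Adversary must choose answers adaptively so that a consistent coloring survives \emph{and} so that flipping a remaining component genuinely changes the required output (moves the majority or creates a tie); that is where the real work lies, and ``a parity bookkeeping on the component sizes'' does not yet supply it.

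Second, the even-$k$ lower bound — which you yourself identify as the crux, and which carries the $n^{1/3}$ error term — is deferred entirely to an unproven standalone lemma, so as written the proposal establishes nothing for even $k$. Third, the upper bound sketch is plausible but incomplete in exactly the places that produce the constant $1/\lceil k/2\rceil$: with one amortized query per block of $\lceil k/2\rceil$ new balls you learn only aggregate block imbalances, yet at the end you must exhibit a \emph{specific} ball of the majority color; the calibration of reference balls, the supply of enough known-colored padding, and the amortization that removes the second (sign-fixing) query per block are all asserted rather than carried out. In short, the outline is in the right spirit, but each of the three parts is missing its decisive step, and the odd-$k$ Adversary as literally described would fail.
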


\vspace{1cm}

\noindent
\textsc{General (or Yes-No) Model = GM}, \cite{DK2015}:

\vspace{5mm}

$\bullet$ $\mathbf{Answer:}$ yes if all balls have the same color in $Q$, no otherwise.

\vspace{3mm}

\begin{thm}\label{dmk} (De Marco, Kranakis \cite{DK2015}, Theorem 5.1, Theorem 5.4) For all $2 \le k,n$ with $2k-1 \le n$, we have 

$$\Big{\lceil} \frac{n}{k} \Big{\rceil} \le A(GM,k,n) \le n-k + \binom{2k-1}{k}.$$

\end{thm}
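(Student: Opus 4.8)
The two inequalities are independent, so I would treat them separately.

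\textbf{Lower bound $\lceil n/k\rceil\le A(GM,k,n)$.} This is a routine Adversary argument showing that essentially every ball must be touched. If an algorithm asks only $t\le\lceil n/k\rceil-1$ queries, then the queries cover at most $tk<n$ balls, so some ball $v$ lies in no query. The Adversary answers every query honestly according to a $2$-colouring $\chi$ of $[n]$; since $v$ is in no query, recolouring $v$ gives a colouring $\chi'$ that yields the same answers on every query, so the algorithm cannot distinguish $\chi$ from $\chi'$. It then suffices to choose $\chi$ so that $\chi$ and $\chi'$ have different correct outputs: if $n$ is even take $\chi$ with the two colour classes balanced, so that $\chi$ has no majority ball and $\chi'$ has one; if $n$ is odd take a $\chi$ in which the colour classes have sizes $\lfloor n/2\rfloor$ and $\lceil n/2\rceil$ and recolour $v$ from the smaller class to the larger so that the majority colour switches. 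The only point needing care is that the Adversary must be able to honour this at the \emph{end}, when $v$ is known: it maintains the invariant that the set $U$ of already-queried balls admits a colouring consistent with all answers whose number of balls of one colour can be chosen anywhere in an interval straddling $n/2$; this interval has length $n-|U|\ge 1$ and is easily preserved when a new query $Q$ is answered, since the colours on $Q\setminus U$ are unconstrained and the Adversary simply announces the resulting ``yes''/``no''. (The tightest instances, where a single packing of size $n-1$ is queried, are checked directly.)

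\textbf{Upper bound $A(GM,k,n)\le n-k+\binom{2k-1}{k}$.} Here the plan is an explicit two-phase algorithm. Fix $S\subseteq[n]$ with $|S|=2k-1$, which exists since $2k-1\le n$. \emph{Phase 1:} ask all $\binom{2k-1}{k}$ subsets of $S$ of size $k$. In any $2$-colouring of $S$ the larger colour class has at least $\lceil(2k-1)/2\rceil=k$ elements, so at least one of these queries is answered ``yes''; and since $k+k>2k-1$, a monochromatic $k$-subset of $S$ cannot lie in the smaller class, so it consists of balls of the majority colour of $S$. Thus picking any ``yes''-query $Y$ gives a set of $k$ balls all of the same colour $c_1$, and $c_1$ is certified to be the majority colour inside $S$ (we neither know nor need to know which of the two colours $c_1$ is). \emph{Phase 2:} fix $y_0\in Y$, and for each of the $n-k$ balls $x\in[n]\setminus Y$ ask the $k$-set $(Y\setminus\{y_0\})\cup\{x\}$; since $Y\setminus\{y_0\}$ is a nonempty monochromatic set of colour $c_1$ (this uses $k\ge 2$), the answer is ``yes'' exactly when $x$ has colour $c_1$. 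After these $n-k$ queries the colour of every ball relative to $c_1$ is known, hence the whole colouring is known, and we output a majority ball if one exists and report ``no majority'' otherwise. The number of queries is $\binom{2k-1}{k}+(n-k)$, matching the claimed bound.

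\textbf{Main obstacle.} The substantive idea is Phase 1 and the choice of the size $2k-1$: one needs the two complementary pigeonhole facts that $2k-1$ balls force a monochromatic $k$-subset while $2k>2k-1$ forces such a subset to be of the majority colour, so that a single round of $\binom{2k-1}{k}$ queries hands us a \emph{certified} majority-coloured $k$-set; after that Phase 2 is bookkeeping, and reusing $Y\setminus\{y_0\}$ in every Phase 2 query is exactly what makes the total land on $n-k+\binom{2k-1}{k}$ rather than something larger. On the lower-bound side the only delicate point is the invariant-maintenance sketched above, needed to cover odd $n$ and the extremal packing configurations.
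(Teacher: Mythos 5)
The paper itself does not prove Theorem \ref{dmk}: it is quoted from De Marco and Kranakis \cite{DK2015}, so your argument has to be judged on its own merits. Your upper bound is correct and is the standard route (the same scheme as \cite{DK2015}, and the one the paper generalizes in Theorem \ref{ayesno}, where the $\binom{2k-1}{k}$ queries inside a fixed $(2k-1)$-set are replaced by a smallest $k$-uniform hypergraph without Property B): a $(2k-1)$-set forces a ``yes'' on one of its $k$-subsets, and then each remaining ball is tested against $k-1$ balls of that monochromatic set, giving exactly $\binom{2k-1}{k}+(n-k)$ queries.

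The lower bound, however, has a genuine gap in the odd-$n$ case. First, as written your recolouring moves $v$ from the smaller class to the larger, which does not switch the majority colour, so every ball of the larger class is a correct output under both $\chi$ and $\chi'$ and no contradiction arises. Second, even with the recolouring in the intended direction, the untouched ball $v$ itself is a majority ball under both $\chi$ and $\chi'$ (it lies in the majority class of each), so an algorithm that simply outputs the never-queried ball $v$ is not refuted; what you need is not ``$\chi$ and $\chi'$ have different correct outputs'' but ``no single output is correct for both.'' This is not a removable technicality: for odd $n$ it can be genuinely optimal to output an unqueried ball. For $k=2$, $n=3$ (allowed by the hypothesis $2k-1\le n$) the single query $\{1,2\}$ solves the problem (``yes'' certifies ball $1$, ``no'' certifies ball $3$), so $A(GM,2,3)=1<\lceil 3/2\rceil$, and any proof based on ``an untouched ball creates ambiguity'' must break down exactly here: after the answer, every consistent colouring of the queried balls is balanced, which certifies the untouched ball as a majority ball. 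Your invariant sketch (an interval of achievable class sizes straddling $n/2$) and the parenthetical claim that the tight packing instances are ``checked directly'' do not address this failure mode, so the odd-$n$ half of the lower bound is unproved (and, at least at this edge case of the stated parameter range, unprovable as stated). The even-$n$ half of your argument is fine: with a balanced $\chi$, the correct-output sets of $\chi$ and $\chi'$ are disjoint, so an untouched ball really does defeat the algorithm.
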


\begin{thm}\label{borz} (Borzyszkowski \cite{DK2015}, Theorem 1) For all $2 \le k,n$ with $2k-1 \le n$ we have 

$$\lfloor n/2\rfloor+k-2 \le A(GM,k,n).$$

\end{thm}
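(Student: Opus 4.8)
The plan is to give an adversary strategy forcing at least $\lfloor n/2\rfloor+k-2$ queries. Fix any algorithm. Let $H$ denote, at each stage, the hypergraph on vertex set $[n]$ whose edges are exactly the queries the adversary has answered ``no''. The adversary answers a new query $Q$ with ``no'' whenever this is still consistent, i.e.\ whenever $H$ with $Q$ added as an edge still has a proper $2$-colouring (a red/blue colouring of $[n]$ under which no edge is monochromatic), and with ``yes'' otherwise. A short check shows that in the latter case every proper $2$-colouring of the current $H$ is already monochromatic on $Q$, so the ``yes'' answer excludes no colouring. Hence throughout the game the colourings consistent with the answers are exactly the proper $2$-colourings of $H$, and $H$ is always properly $2$-colourable. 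Since $H$ has no more edges than the number of queries asked, it suffices to prove: for every $k$-uniform, properly $2$-colourable hypergraph $H$ on $[n]$ (with $2\le k$, $2k-1\le n$) having at most $\lfloor n/2\rfloor+k-3$ edges, there is \emph{(a)} a proper $2$-colouring with a strict majority colour class, and \emph{(b)} for every ball $i$, a proper $2$-colouring in which $i$ lies in a class of size at most $\lfloor n/2\rfloor$. Granting this, after at most $\lfloor n/2\rfloor+k-3$ queries $H$ meets these hypotheses, so no answer the algorithm could give is valid — \emph{(b)} forbids naming any ball, \emph{(a)} forbids declaring ``no majority'' — and the algorithm cannot have stopped; thus $A(GM,k,n)\ge\lfloor n/2\rfloor+k-2$.

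Both \emph{(a)} and \emph{(b)} reduce to finding a small transversal: if $R\subseteq[n]$ meets every edge and contains no edge, then colouring $R$ red and $[n]\setminus R$ blue is a proper $2$-colouring whose red class has size $|R|$, and that class is not a strict majority once $|R|\le\lfloor n/2\rfloor$. For \emph{(b)} with a prescribed ball $i$ I would take $R=\{i\}\cup R'$ with $R'$ a transversal of the edges avoiding $i$; for \emph{(a)}, needed only when $n$ is even (for odd $n$ every proper colouring has a strict majority), a transversal of size at most $n/2-1$ forces the imbalance. So everything rests on the estimate that $H$ has a transversal of size at most $\lfloor n/2\rfloor-1$. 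When $k=2$ the hypothesis already gives at most $\lfloor n/2\rfloor-1$ edges, so picking one vertex per edge suffices. For $k\ge 3$ one may assume $H$ has more than $\lfloor n/2\rfloor-1$ edges; then $k$ times the number of edges exceeds $n$ with room to spare, so the degree sum substantially exceeds $n$, and a greedy transversal that keeps absorbing a vertex lying in at least two still-uncovered edges — until the uncovered edges are pairwise disjoint, hence number at most $n/k$ — exhausts this surplus and terminates with at most $\lfloor n/2\rfloor-1$ vertices.

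I expect the genuine difficulty to be sharpening this greedy count to the exact threshold $\lfloor n/2\rfloor-1$ rather than $\lfloor n/2\rfloor+O(k)$: the estimate is comfortable once $n$ is not small compared with $k$, but the boundary regime (roughly $n$ close to $2k$, and in particular $k=3$) has to be handled separately, leaning on $2k-1\le n$ to bound how disjointly the few edges can sit. Two side issues also need care. First, one must ensure the transversal $R$ contains no edge; this is automatic when $|R|<k$, i.e.\ for large $k$, but for small $k$ one should instead build the colouring componentwise — for each connected piece of $H$ choose a proper $2$-colouring putting as little as possible (and the ball $i$, in its own component) into the colour eventually assigned to $i$, send all isolated vertices to the other colour, and check the class sizes add up to at most $\lfloor n/2\rfloor$. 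Second, the claim that a forced ``yes'' is uninformative must be verified cleanly, since it is precisely what keeps the set of consistent colourings equal to the set of proper $2$-colourings of $H$ at every step.
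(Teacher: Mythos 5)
Your reduction is set up correctly: the ``answer no whenever the enlarged hypergraph stays properly $2$-colourable'' adversary does keep the consistent colourings equal to the proper $2$-colourings of the no-hypergraph $H$ (a forced ``yes'' excludes nothing, and later edges only shrink the colouring set compatibly), and conditions (a) and (b) do block both possible outputs. But the quantitative core — that every properly $2$-colourable $k$-uniform $H$ with at most $\lfloor n/2\rfloor+k-3$ edges has a colour class of size at most $\lfloor n/2\rfloor-1$, resp.\ one of size at most $\lfloor n/2\rfloor$ through a prescribed ball — is exactly where the proof is missing, as you half-admit. Your greedy gives a transversal of size at most $(m+d)/2$ with $d\le n/k$ disjoint leftover edges, and with $m\le\lfloor n/2\rfloor+k-3$ this is at most $\lfloor n/2\rfloor-1$ only when $\lfloor n/2\rfloor\ge k-1+n/k$; this fails for all $n$ up to roughly $2k+4$ for general $k$, and up to about $12$ when $k=3$. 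Since the theorem's hypothesis is precisely $2k-1\le n$, this boundary regime is not a corner case one can defer: it is where the additive $k-2$ lives. A counting repair exists in part (e.g.\ for $n=2k$, comparing $mk$ with $\binom{2k}{k+1}$ produces a $(k+1)$-set containing no edge, giving (a)), but the prescribed-ball statement (b) is strictly tighter — already for $k=3$, $n=6$ three edges can contain an edge inside every $4$-subset avoiding the prescribed ball, so that count fails and one must argue via balanced colourings — and none of this is in your write-up.

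A second unresolved step is the requirement that the transversal contain no edge, which is automatic only when its size is below $k$ — essentially never for small $k$: for $k=3$ with $m$ near $\lfloor n/2\rfloor$ your greedy set has size of order $n/4$ and may contain a triple, and repairing this is not local (removing a vertex destroys transversality; minimal transversals can still contain edges). The ``build it componentwise'' remark is a placeholder, not an argument. For calibration: the paper itself does not prove this theorem; it quotes it, observing that Borzyszkowski's model (where a ``no'' answer additionally exhibits two differently coloured balls) gives the algorithm strictly more information, so his exact determination of that model's complexity implies the stated lower bound for the general model. A self-contained adversary proof along your lines would therefore be a genuinely different route, but as it stands the two combinatorial steps above are gaps, not routine details.
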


We just mention here that there is a third model in \cite{DK2015}, where the answer gives the partition of the balls into the two color classes (but does not tell which part belongs to each color). The lower bound by Eppstein and Hirschberg \cite{EH2015} given in Theorem \ref{eh} is in fact proved there for that more general model. We also mention that Borzyszkowski \cite{B2014} deals with a fourth model, where the answer is yes if all balls have the same color in the query and no otherwise, like in the general model, but in that case also two balls of different colors in that query are pointed out. He determined exactly the number of queries needed to ask in that model, and this implies the lower bound in Theorem \ref{borz}.

\subsection{New results}

In this subsection we state our new results. Before that, we recall some definitions.

A hypergraph has Property B - introduced by Bernstein \cite{B1908} - if its vertices can be colored with two colors, such that there is no monochromatic edge in the hypergraph. Let us denote by $\mathsf{m}(k)$ the cardinality of the edge set of a smallest $k$-uniform hypergraph that does not have Property B. As far as we know the best known recent upper bound on $\mathsf{m}(k)$ is $O(2^k\sqrt{k/\log{k}})$ due to Radhakrishnan and Srinivasan \cite{RS2000}.

\begin{thm}\label{ayesno} For any $3 \le k \le n$, we have $$\frac{3n+5}{4} \le A(GM,k,n)\le n-k+\mathsf{m}(k).$$

\end{thm}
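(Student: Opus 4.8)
The plan is to handle the upper and lower bounds separately, as they are of quite different character.

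For the \emph{upper bound} $A(GM,k,n)\le n-k+\mathsf{m}(k)$, the strategy is an explicit adaptive algorithm that generalizes the familiar strategy for the pairing model. First I would fix a $k$-uniform hypergraph $\cH$ on $2k-1$ or fewer vertices (in fact on exactly $\mathsf{m}(k)$ edges) that fails Property B, i.e.\ is not $2$-colorable; this is possible once $n$ is at least the number of vertices of such a hypergraph, and for small $n$ one argues separately. The algorithm proceeds in two phases. In the first phase we ask $n-k$ queries that ``chain'' the balls together: having already determined (relative) information about some subset $S$ of balls via monochromatic-type answers, we query a fresh $k$-set overlapping $S$ in exactly $k-1$ balls, so each new query either ties one new ball into the already-understood block or reveals a ``no'' answer that splits things — in either case one query fixes the color of one new ball relative to the others, costing $n-k$ queries to process all but the last $k$ balls (this is the same accounting that gives the $n-k$ term in Theorem~\ref{dmk}). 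In the second phase, on the remaining ``hard core'' of $k$-ish balls whose internal coloring is still ambiguous, we ask the $\mathsf{m}(k)$ edges of a copy of $\cH$: since $\cH$ is not $2$-colorable, at least one of these queries must return a monochromatic (``yes'') answer, which pins down a color class of size $k$ and, combined with the chained information from Phase~1, identifies a majority ball or certifies a tie. The bookkeeping to show these two phases really do compose to give full information — i.e.\ that the ``yes'' edge plus the chain always resolves the Majority Problem — is the main technical content, but it is routine once the phase structure is set up.

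For the \emph{lower bound} $A(GM,k,n)\ge \tfrac{3n+5}{4}$, I would use an adversary (weight-function / potential) argument. Adversary maintains a family of colorings consistent with all answers given so far, and answers each query so as to keep this family large and ``balanced''; the goal is to show that after $t$ queries, if $t < \tfrac{3n+5}{4}$, there are still two consistent colorings with different majority balls (or one with a tie and one without). The natural bookkeeping is in terms of the query hypergraph: a ball that has never appeared in any query is completely free, and more refined: one assigns to each ball or small group of balls a ``cost'' that Adversary must pay to constrain it, and shows that constraining enough balls to force a unique answer to the Majority Problem requires total cost $\ge \tfrac{3n+5}{4}$. The factor $3/4$ suggests the following dichotomy in the potential: balls touched by exactly one query so far are ``half-paid'' while balls in two or more queries (or in a ``no''-monochromatic constraint that genuinely links them) are ``fully paid'', and a counting of edges versus paid balls gives that $t$ queries pay for at most $\tfrac{4}{3}t$ units of constraint while $n+\Theta(1)$ units are needed; solving $ \tfrac{4}{3}t \ge n + O(1)$ yields $t \ge \tfrac{3n}{4} + O(1)$, and a careful tracking of the additive constant gives $+\tfrac54$.

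The hard part will be the lower bound, specifically designing the adversary's answering rule and the accompanying potential so that (a) Adversary can always answer consistently with two colorings that disagree on the majority question, and (b) the potential decreases by at most $4/3$ per query. A query can in principle return ``no'' and thereby give information about many pairs of balls at once, so one must argue that a ``no'' answer is still worth only a bounded amount of progress toward resolving majority — the point being that ``not all same color'' is a very weak constraint. I expect the cleanest route is to have Adversary commit to answering ``no'' on essentially every query (so queries only ever rule out monochromatic colorings), maintain a large set of balls whose colors are entirely unconstrained, and extract the $3/4$ from the observation that to make a single ball provably in the majority one must constrain roughly half of the remaining $n$ balls, each constrainting step touching at most $O(1)$ balls' worth of ``freedom'' per query. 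Reconciling this with the small additive slack $+\tfrac54$ — as opposed to $+O(1)$ — is where the delicate case analysis on small configurations (and the interplay with $k\ge 3$) will be needed.
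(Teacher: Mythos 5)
Your upper bound has the right key ingredient (a non-Property-B hypergraph with $\mathsf{m}(k)$ edges forces a ``yes'' answer), but the phase structure is backwards and, as stated, the chaining phase does not work. In the yes--no model a ``no'' answer to a query consisting of $k-1$ previously seen balls plus one new ball fixes nothing about the new ball unless those $k-1$ balls are already known to be monochromatic; at the start of your Phase~1 no such anchor exists, so the claim that ``in either case one query fixes the color of one new ball relative to the others'' is false, and the later ``yes'' on a disjoint core cannot be combined with a chain of uninterpreted ``no'' answers. The correct order (and the paper's proof) is the reverse: first ask the $\mathsf{m}(k)$ edges of a $k$-uniform hypergraph without Property B, so that at least one query is answered ``yes'' and yields a known monochromatic $k$-set; then, for each of the remaining $n-k$ balls, query a fixed $(k-1)$-subset of that set together with the new ball, which determines the new ball's color outright. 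Also note that your parenthetical ``on $2k-1$ or fewer vertices (in fact on exactly $\mathsf{m}(k)$ edges)'' conflates two constructions: the complete $k$-uniform hypergraph on $2k-1$ vertices (which gives the $\binom{2k-1}{k}$ term of Theorem~\ref{dmk}) and an edge-minimal non-$2$-colorable hypergraph, which in general needs many more than $2k-1$ vertices and certainly does not fit on a leftover core of ``$k$-ish'' balls.

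For the lower bound you have guessed the right per-query ratio ($4/3$), but the proposal is a plan rather than a proof, and the parts you defer are exactly where the difficulty lies. An adversary who ``answers no on essentially every query'' and keeps a large free set does not by itself yield $3n/4$: one must specify what structural invariants are maintained and show that the game cannot end early. The paper's adversary does this by volunteering the colors of some balls (so that the uncolored part of the instance stays highly structured), keeping $\bigl||R(i)|-|B(i)|\bigr|\le 1$, forcing the uncolored parts of partially colored queries to be pairwise disjoint and of size at least $2$, and forcing the components of fully uncolored queries to be stars (Lemma~\ref{lem:strategy1}); it also needs a second strategy to which it switches when a specific tie-breaking endgame threatens (Case 2 of Claim~\ref{clm:endstrategy1}, Lemma~\ref{lem:strategy2}). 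Only with these invariants can one prove the two facts your sketch asserts without proof: (a) the solver can terminate only when at least $n-1$ balls have known colors (Claims~\ref{clm:endstrategy1} and~\ref{cl21}), and (b) each query causes at most $4/3$ balls on average to become known (Lemma~\ref{lem:jkstep}, via the case analysis producing $(0,1)$-, $(1,1)$-, $(2,2)$- and $(4,3)$-steps). Your potential argument names the desired inequality but supplies neither the answering rule nor the invariant that makes a ``no'' answer worth at most $4/3$ units, so as it stands the bound $\frac{3n+5}{4}$ is not established.
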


\begin{thm}\label{acm} For any $3 \le k \le n$, we have $$A(CM,k,n)\ge \frac{6n}{5k+6}-1. $$

\end{thm}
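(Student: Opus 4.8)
The plan is an adversary argument with a potential function. Adversary will answer the queries so as to keep the Majority Problem unsolved, maintaining a colouring $\chi$ consistent with all answers so far together with bookkeeping that records how much ``majority-flexibility'' survives; a potential $\Phi$ measures this, with $\Phi=n$ before the first query. The theorem will follow once we show that $\Phi$ drops by at most $\frac{5k+6}{6}$ per query and that the algorithm cannot have finished while $\Phi$ is above a fixed threshold.

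The first thing to get right is what Adversary must keep alive. It is \emph{not} enough to keep two consistent colourings with opposite majorities, since a ball can lie on the majority side of both. What Adversary needs is that (i) for every ball $i$ there is a colouring consistent with the answers in which $i$ is not a majority ball, and (ii) there is a consistent colouring whose colour classes are unequal. A convenient sufficient state is to have a bounded-size ``reservoir'' of balls whose colours can be toggled — singly or in coordinated groups — so as to change the global count while leaving every answer unchanged, together with a colouring of the remaining balls whose imbalance is small. One should note here a parity phenomenon: a single ball can be toggled without changing the answer of a query containing it only when that query has colour-surplus exactly $1$ (possible only for odd $k$), so for even $k$ the reservoir must be organized into groups that are toggled together, sitting inside queries of surplus $2, 4, \dots$; handling this is part of the work. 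As long as $\Phi$ exceeds the threshold, such a state can be arranged, so it suffices for Adversary to keep $\Phi$ large.

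The core step, and the main obstacle, is the per-query estimate. When a query $Q$ of size $k$ arrives, Adversary must (a) commit to an answer, which means choosing which colour leads in $Q$ and — through the colours it assigns to the as-yet-uncommitted balls of $Q$ — the exact surplus, and (b) reorganize $\chi$ and the bookkeeping so that all earlier answers and the new one remain consistent with every allowed toggle. The claim is that this can always be done so that $\Phi$ decreases by at most $\frac{5k+6}{6}$: roughly, the balls of $Q$ that were still flexible are re-grouped into new, balanced reservoir groups sitting inside $Q$; reservoir groups that $Q$ cuts are merged so as to cancel their surplus contributions to $Q$ against one another; and only the forced remainders — coming from parity, and from having to freeze at most one group whose part outside $Q$ is then also lost — are sacrificed, which a case analysis (on the residue of $k$ modulo $6$, and on how $Q$ meets flexible balls, whole groups, and cut groups) shows costs at most $\frac{5k+6}{6}$. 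The delicate points are that merging balanced groups must preserve consistency against \emph{all} previously asked queries, and that one really can cancel the surpluses down to a bounded remainder; these are where the estimate has to be established with care.

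Granting the estimate, after $q$ queries $\Phi\ge n-q\cdot\frac{5k+6}{6}$. If the algorithm has solved the Majority Problem, state (i)--(ii) can no longer be maintained, so $\Phi$ has fallen below a fixed absolute threshold $c_0$ (one may take $c_0=3$); hence $n-q\cdot\frac{5k+6}{6}<c_0$, giving $q>\frac{6(n-c_0)}{5k+6}$, and for $c_0=3$ and $k\ge 3$ this is at least $\frac{6n}{5k+6}-1$, since then $5k+6\ge 18=6c_0$. Therefore $A(CM,k,n)\ge\frac{6n}{5k+6}-1$.
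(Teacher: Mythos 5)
There is a genuine gap: your argument is a plan rather than a proof, and the plan's entire quantitative content is asserted, not established. The potential $\Phi$ is never actually defined (it is only described as measuring ``majority-flexibility''), the ``reservoir'' state is not specified precisely enough to check consistency with the counting answers, and the two claims on which everything rests --- (a) that whenever $\Phi$ exceeds a fixed threshold Adversary can maintain a state in which no ball is forced to be a majority ball and the class sizes are not forced equal, and (b) that each query can be answered and the bookkeeping reorganized with $\Phi$ dropping by at most $\frac{5k+6}{6}$ --- are exactly the hard part, and you explicitly defer them (``these are where the estimate has to be established with care''). In the Counting Model a query pins down the exact surplus inside $Q$, so ``merging cut groups so their surpluses cancel'' and ``re-grouping flexible balls of $Q$ into balanced groups'' must be shown compatible with \emph{every} earlier query simultaneously; without a concrete invariant this can fail (a new query can intersect several old queries whose residual freedoms are already coupled), and nothing in the sketch rules that out or produces the constant $\frac{5k+6}{6}$. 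The concluding arithmetic ($n-q\cdot\frac{5k+6}{6}<c_0$ with $c_0=3$) is fine, but it is conditional on (a) and (b), so the theorem is not proved.

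For comparison, the paper's proof supplies precisely the missing concrete strategy, and it is not organized as a per-query potential drop. It fixes $x=\lfloor (k-1)/3\rfloor$, calls a query \emph{open} if it still contains more than $k-x$ balls of degree one and \emph{closed} otherwise, answers $x$ to every open query, and has Adversary reveal the colors of all balls of degree at least two and of all balls in queries that become closed, always choosing colors to minimize the imbalance $g(i)=\bigl||R(i)|-|B(i)|\bigr|$; one then proves the invariant $g(i)\le k-2x$. The endgame is a counting argument: a majority ball can only be claimed among balls whose color is known, there can be at most $(k-2x-|A|)/(k-3x)$ open queries at the end, so all but $O(k)$ balls must be covered by queries containing at most $k-x$ degree-one balls, and since degree-$\ge 2$ balls consume two incidences each, a query accounts for at most $k-x/2\approx\frac{5k}{6}$ such balls, yielding the bound. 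If you want to salvage your route, you would need to define $\Phi$ explicitly (e.g.\ via uncovered balls plus degree-one balls of open queries, which is essentially what the paper counts), prove the invariant that bounds the forced imbalance, and carry out the case analysis for a single query --- at which point you would in effect be reconstructing the paper's argument.
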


\section{Proof of Theorem \ref{ayesno}}

\subsection{Upper bound}

We will improve the upper bound using Property B. We start with $\mathsf{m}(k)$ queries that form a hypergraph without Property B. Then we must get a yes answer to one of them. We take a subset of size $k-1$ of that query, and another ball. This way we find out if that ball is of the same color as the $k-1$-set. Repeating this for every ball, we can identify the color classes.
\qed

\subsection{Lower bound}

\subsubsection{Introduction, Statements}


We present a strategy of Adversary. In his strategy Adversary does not only answer yes or no to a query, but possibly also tells the color of some balls. If he answers 'yes' to a query (that means all the balls have the same color in that query), then he tells the color of the balls in that query. In some other cases, he tells the colors of some of the balls. As he gives more information, a lower bound for the number of the queries asked provides a lower bound for $A(GM,k,n)$. 

The aim of telling the color of some balls is to build extra structure: to be able to control the possible colors of those balls, whose color is yet unknown at a certain step. 

\
 
During the description of the strategy we will use (possibly with some index) small letters for balls, capital letters for sets of balls and calligraphic letters for families of sets of balls. Let $Q_i$ be the query asked in the $i^{th}$ round and the set of queries during the first $i$ rounds is $\cQ(i):=\{Q_1,Q_2, \ldots, Q_i\}$.

\

\textbf{Colorings.} As we mentioned, in each round the answer of Adversary consists of a yes or no response and the color of some (possibly zero) balls. Let $S \subset [n]$ be a component of the query hypergraph $([n],\cQ(i))$ (i.e. a containment minimal subset $X$ of the vertices such that for every $Q \in \cQ(i)$, either $Q \subset X$, or $Q \cap X = \emptyset$). Let us denote by $con(S,i)$ those colorings of the balls in $S$, that are consistent with the answers for those queries $Q \in \cQ(i)$, that are contained in $S$. Since queries in different components have no effect on each other, we have the following :

\begin{prop}\label{unionconsistent} Let $S$ and $S'$ be union of components of the query hypergraph $([n],\cQ(i))$ with $S \cap S'=\emptyset$, and suppose that $c$ is a consistent coloring of $S$ and $c'$ is a consistent coloring of $S'$. Then the union of these colorings is a consistent coloring of $S \cup S'$.

\end{prop}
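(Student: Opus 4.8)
The plan is to verify directly that the disjoint union $c \cup c'$ of the two colorings satisfies every constraint imposed by the queries in $\cQ(i)$ that lie inside $S \cup S'$. The key structural input is that $S$ and $S'$ are each unions of \emph{components} of the query hypergraph $([n],\cQ(i))$: this means that every query $Q \in \cQ(i)$ either is contained in a single component, or meets no component at all that is contained in $S$ or $S'$ — in particular, no query can straddle the boundary between $S$ and $S'$, or between either of them and the rest of $[n]$. So the first step is to record this observation: if $Q \in \cQ(i)$ and $Q \subseteq S \cup S'$, then since $Q$ is connected in $([n],\cQ(i))$ it lies inside a single component, and that component is contained either in $S$ or in $S'$ (using $S \cap S' = \emptyset$), hence $Q \subseteq S$ or $Q \subseteq S'$.

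The second step is then immediate: fix any $Q \in \cQ(i)$ with $Q \subseteq S \cup S'$. By the previous step, $Q \subseteq S$ (say). The coloring $(c \cup c')$ restricted to $Q$ agrees with $c$ restricted to $Q$, because $Q$ is disjoint from $S'$. Since $c \in con(S,i)$, the coloring $c$ is consistent with the answer Adversary gave to $Q$; therefore so is $c \cup c'$. The case $Q \subseteq S'$ is symmetric. As this holds for every query contained in $S \cup S'$, we conclude $c \cup c' \in con(S \cup S', i)$.

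There is essentially no obstacle here; the statement is a bookkeeping lemma and the only thing to be careful about is the definition of ``consistent coloring of a union of components,'' namely that only the queries \emph{contained} in the set constrain the coloring. Once one unwinds that definition, the argument is the two steps above. If one wants to be slightly more formal about ``the answer to $Q$'' when the answer includes both a yes/no response and the revealed colors of some balls, one notes that all such revealed balls lie in $Q$ (Adversary only reveals colors of balls in a query he is answering) and hence in the same one of $S, S'$, so the revealed-color part of the answer is also respected by $c \cup c'$ for the same reason.
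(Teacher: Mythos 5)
Your argument is correct and amounts to exactly the justification the paper itself gives: the paper states this proposition with no formal proof, only the remark that queries in different components have no effect on each other, and your two steps (a query contained in $S \cup S'$ lies in a single component, hence entirely in $S$ or in $S'$; the union coloring restricted to it agrees with $c$ or $c'$) are the natural formalization of that remark. One minor caveat: your parenthetical claim that Adversary only reveals colors of balls in the query he is currently answering is not literally true of the paper's strategy (e.g.\ in Case 1/D he colors a ball of an earlier query $P$), but this does not affect your proof, since such balls still lie in queries of $\cQ(i)$ and hence in the same component, so the revealed-color constraints remain local and your argument goes through unchanged.
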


We say that \textit{we know the color of the ball x} 
after the $i^{th}$ round if $c(x)$ is the same for all $c \in con([n],i)$. Let us denote by $R(i)$ (resp. $B(i))$ the set of balls that are known to be red (resp. blue) after the $i^{th}$ round. Note that if Adversary never gives us the color of some balls as additional information, then we cannot know the color of any ball, as changing the color of all the balls is consistent with all the yes/no responses. 

\

\textbf{Different type of queries and balls after round $i$} 

Now we introduce some notation. We use it to describe the structure that Adversary will maintain during his strategy.

\vspace{2mm}

$\bullet$ $\cQ_r(i):=\{Q \in \cQ(i) : Q \cap R(i) \neq \emptyset, \ Q \not \subset R(i), \ Q \cap B(i) = \emptyset \}$, the set of queries that contain red but no blue balls and are not subset of $R(i)$.

\vspace{1mm}

$\bullet$ $\cQ'_r(i):=\{Q \setminus R(i) : Q \in \cQ_r(i) \}$, those parts of the previous queries that are not necessarily red. Note that by definition each of these contains at least unknown ball that will turn out to be a blue ball at the end of the algorithm (but we do not know which one it is).

\vspace{1mm}

$\bullet$ $X_r(i):= \{x \in [n] : \exists P \in \cQ'_r(i) \ \textrm{with} \ x \in P \}(=\cup\cQ'_r(i))$

\vspace{2mm}
Now we define the similar notions for color blue:
\vspace{1mm}

$\bullet$ $\cQ_b(i):=\{Q \in \cQ(i) : Q \cap B(i) \neq \emptyset, \ Q \not \subset B(i), \ Q \cap R(i) = \emptyset \}$

\vspace{1mm}

$\bullet$ $\cQ'_b(i):=\{Q \setminus B(i): Q \in \cQ_b(i) \}$

\vspace{1mm}

$\bullet$ $X_b(i):=  \{x \in [n] : \exists P \in \cQ'_b(i) \ \textrm{with} \ x \in P \}(=\cup\cQ'_b(i))$

\vspace{3mm}

$\bullet$ $\cQ_d(i):=\{Q \in \cQ(i) : Q \cap (R(i) \cup B(i))= \emptyset\}$, the set of queries where all balls are of unknown color.

\vspace{1mm}

$\bullet$ $X_d(i):= \{x \in [n] : \exists P \in \cQ_d(i) \ \textrm{with} \ x \in P \}(=\cup\cQ_d(i))$

\vspace{3mm}

$\bullet$ $\cQ_0(i):=\{Q \in \cQ(i) : Q \cap R(i) \neq \emptyset, \ Q \cap B(i) \neq \emptyset \ \textrm{or} \ Q \subset R(i) \ \textrm{or} \ Q \subset B(i) \}$, the set of queries that do not give any more information once we know $R(i)$ and $B(i)$.

\vspace{1mm}

$\bullet$ $X_0(i):= [n] \setminus (R(i) \cup B(i) \cup X_r(i)\cup X_b(i)\cup X_d(i)$, the set of remaining balls.

\vspace{2mm}

\begin{figure}
\centering
    \includegraphics[width=0.8\textwidth]{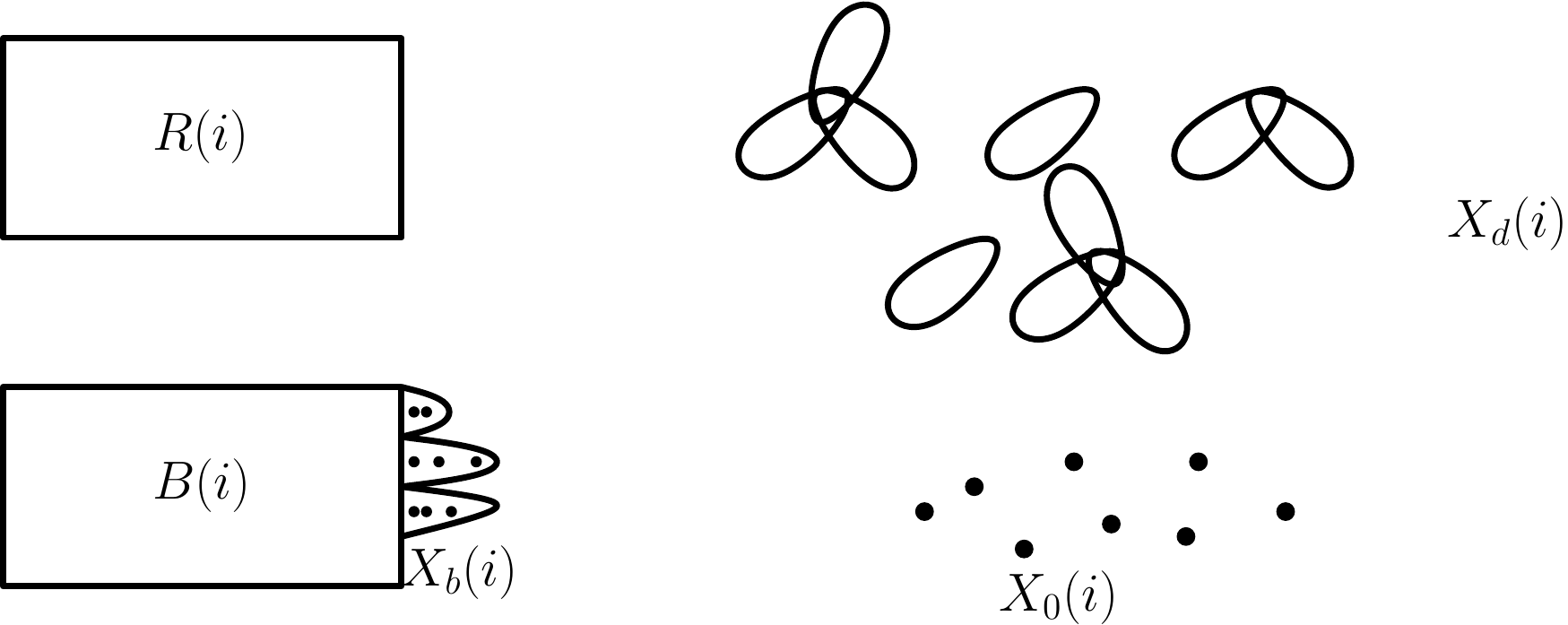}
        \caption{Partition of the balls after $i$ round using Adversary's strategy}
        \label{fig:eps1}
\end{figure}
\vspace{3mm}

\textbf{Overview of the strategy.} Adversary's main purpose is ensuring that a majority ball is found only if almost every ball is in $R(i)$ or $B(i)$. To do so, Adversary makes $R(i)$ and $B(i)$, the set of red and blue balls roughly equal size, and maintains strong structure conditions on the sets in $\cQ'_b(i)$, $\cQ'_r(i)$ and $\cQ_d(i)$, to be able to control the balls whose color is unknown. 

\ 

The strategy of Adversary consists of two parts that we call $\mathtt{STRATEGY1}$ and $\mathtt{STRATEGY2}$. Adversary starts with $\mathtt{STRATEGY1}$, and at one specific point he might switch to $\mathtt{STRATEGY2}$ and use that till the end of the process. That specific point is described in case 2 of Claim \ref{clm:endstrategy1}. If $\mathtt{STRATEGY1}$ would lead to that point, Adversary immediately aborts $\mathtt{STRATEGY1}$, picks the opposite answer and switches to $\mathtt{STRATEGY2}$.

\ 

Let us briefly overview the structural properties of the query hypergraph that Adversary will maintain during $\mathtt{STRATEGY1}$ (see Figure \ref{fig:eps1}), and then describe more precisely in Lemma \ref{lem:strategy1} below. First of all, Adversary wants $R(i)$ and $B(i)$ to have roughly equal size (Lemma \ref{lem:strategy1}: 1.). Then he maintains three properties of the queries in $\cQ_r(i)\cup \cQ_b(i)$, i.e. those queries that intersect one of $B(i)$ or $R(i)$, but are not contained in it. First, he wants that the part of them not in $R(i)$ or $B(i)$ (i.e. the part in $\cQ'_b(i)$) should be large (which means size at least $2$, Lemma \ref{lem:strategy1}: 2.). Also, those parts should be disjoint from each other (Lemma \ref{lem:strategy1}: 4.). Finally, such queries exist just for one of $R(i)$ or $B(i)$; the smaller one (Lemma \ref{lem:strategy1}: 3.). For the components of the remaining queries (where we do not know the color of any ball) Adversary maintains that they form stars, which means there is an element $x$ (called the center of the star), such that every pairwise intersection of queries in that component is $\{x\}$ (Lemma \ref{lem:strategy1}: 6.). He also maintains that the queries in these components are disjoint from those queries that intersect $B(i)$ or $R(i)$ (Lemma \ref{lem:strategy1}: 5.).
 
\ 

We state the lemma about $\mathtt{STRATEGY1}$ with the above mentioned properties and we will define $\mathtt{STRATEGY1}$ during the proof of 
Lemma \ref{lem:strategy1}.

\begin{lemma}\label{lem:strategy1}

Using $\mathtt{STRATEGY1}$ Adversary can answer in the first $i$ rounds in such a way that for all $i \ge 1$ we have: 

\vspace{2mm}

1. $\Big{|}|R(i)|-|B(i)|\Big{|} \le 1$,

\vspace{1mm}

2. for $Q \in \mathcal{Q}_r'(i) \cup \mathcal{Q}_b'(i)$ we have $|Q| \ge 2$,

\vspace{1mm}

3. either $\mathcal{Q}_r'(i)$ or $\mathcal{Q}_b'(i)$ is empty; if $|R(i)|>|B(i)|$ then $\mathcal{Q}_r'(i)=\emptyset$ and vice versa,

\vspace{1mm}

4. for different $P, Q \in \mathcal{Q}_r'(i) \cup \mathcal{Q}_b'(i)$, we have $P \cap Q = \emptyset$,

\vspace{1mm}

5. for $P \in \mathcal{Q}_r'(i) \cup \mathcal{Q}_b'(i)$ and $ Q \in \mathcal{Q}_d(i)$, we have $P \cap Q = \emptyset$,

\vspace{1mm}

6. for $Q \in \cQ_d(i) \ \exists q \in Q$ such that $Q \setminus \{q\}$ is disjoint from other members of $\mathcal{Q}_d(i)$  

(or equivalently we can say that each component is a star).

\end{lemma}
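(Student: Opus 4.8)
\textbf{Proof plan for Lemma \ref{lem:strategy1}.}

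The plan is to define $\mathtt{STRATEGY1}$ inductively on the round number $i$ and simultaneously verify that properties 1--6 are preserved, so that the construction of the strategy and the proof of the lemma are carried out together. The base case $i=0$ is trivial: all of $R(0),B(0),\cQ'_r(0),\cQ'_b(0),\cQ_d(0)$ are empty and every property holds vacuously. For the inductive step I assume properties 1--6 hold after round $i-1$ and that a query $Q_i$ of size $k$ is asked; I must describe how Adversary answers (yes/no plus colors of some balls) and check that 1--6 survive. The natural case division is according to how $Q_i$ meets the current partition $R(i-1)\cup B(i-1)\cup X_r(i-1)\cup X_b(i-1)\cup X_d(i-1)\cup X_0(i-1)$ shown in Figure \ref{fig:eps1}. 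Throughout, whenever Adversary is forced (or chooses) to reveal colors, he will do so in a way that keeps $\bigl||R|-|B|\bigr|\le 1$ (property 1), and he will reveal colors of \emph{both} colors in roughly balanced amounts; the key tension is between revealing enough to keep the unknown-color structure (the sets in $\cQ'_r,\cQ'_b,\cQ_d$) clean, and not revealing so much that property 1 fails.

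The main cases I expect are: (a) $Q_i$ is entirely inside $R(i-1)\cup B(i-1)$, or meets both $R(i-1)$ and $B(i-1)$ --- then $Q_i$ joins $\cQ_0$, answer ``no'' (or ``yes'' if it happens to lie in one class), and nothing changes; (b) $Q_i$ is disjoint from $R(i-1)\cup B(i-1)$ and also from all the $X$-sets, i.e.\ it is a ``fresh'' query on $X_0$ --- then Adversary answers ``no'' and $Q_i$ starts a new one-edge component, which is trivially a star, so it enters $\cQ_d$; (c) $Q_i$ overlaps some existing structure. Subcase (c) is where the real work lies and where the six properties must be actively restored. If $Q_i$ meets an existing star component in $\cQ_d$ in more than one vertex, or meets two different components, Adversary must collapse/merge things: the cleanest move is to reveal the colors of enough balls (balancing red and blue) so that the offending overlaps get absorbed into $R\cup B$ and the remaining pieces are again disjoint stars; this is the step that could violate property 1 if not done carefully, and it is the main obstacle. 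Similarly, if $Q_i$ touches some $P\in\cQ'_r(i-1)\cup\cQ'_b(i-1)$ (the ``long leftover'' pieces attached to $R$ or $B$), Adversary reveals colors to break the overlap, possibly converting one color's leftover queries into $\cQ_0$-queries and thereby switching which of $\cQ'_r,\cQ'_b$ is nonempty (property 3); he uses the freedom of one extra ball of slack in property 1 to do this. The condition $|Q|\ge 2$ in property 2 is maintained because whenever a leftover piece would shrink to a single ball, that ball's color is in fact forced, so Adversary reveals it and the piece disappears into $R\cup B$ rather than staying in $\cQ'_r\cup\cQ'_b$.

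A point requiring care is consistency: by Proposition \ref{unionconsistent} it suffices to check, component by component, that the colorings Adversary commits to are realizable, and that the yes/no answer he gives is compatible with at least one coloring in which property 1 will still be achievable at the end; since the star structure in $\cQ_d$ and the disjoint-leftover structure in $\cQ'_r,\cQ'_b$ each leave at least one ``free'' ball per component, Adversary always retains enough freedom. I would also note explicitly the clause already flagged in the overview: if answering honestly would force the configuration described in case 2 of Claim \ref{clm:endstrategy1}, Adversary instead gives the opposite answer and abandons $\mathtt{STRATEGY1}$ for $\mathtt{STRATEGY2}$, so the inductive invariant is only claimed for as long as $\mathtt{STRATEGY1}$ is in force. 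The hard part, to repeat, is the bookkeeping in subcase (c): showing that the number of balls whose colors Adversary is compelled to reveal in order to restore properties 2--6 is small enough (and can be split evenly enough between the two colors) that property 1 is never broken by more than one.
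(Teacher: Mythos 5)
Your plan has the right overall shape --- it is exactly the paper's approach: $\mathtt{STRATEGY1}$ is defined inductively, round by round, through a case analysis of how $Q_i$ meets $R(i-1)\cup B(i-1)\cup X_b(i-1)\cup X_d(i-1)\cup X_0(i-1)$, with the invariants 1--6 checked case by case, and with the escape to $\mathtt{STRATEGY2}$ handled exactly as you say. But as a proof it has a genuine gap, and you name it yourself: everything in your subcase (c) is deferred. For this lemma the case analysis \emph{is} the proof; the statement asserts the existence of a strategy, so a proof must actually specify, for every intersection pattern, what Adversary answers and which balls he colors, and then verify 1--6. Your proposal never says when the answer is yes versus no (in the paper yes occurs only for $Q_i\subset R(i-1)$, $Q_i\subset B(i-1)$, and the one delicate situation $Q_i\cap X_0(i-1)$ a single ball with the rest of $Q_i$ in $B(i-1)$), and never exhibits the mechanism that makes property 1 automatic rather than an obstacle: in the paper Adversary always colors balls in exactly balanced pairs --- one red and one blue taken from the same resolved piece $P'\in\cQ'_b(i-1)$ or star edge $P\in\cQ_d(i-1)$, which is possible precisely because of the maintained facts $|P'|\ge 2$ and $k\ge 3$ --- so the ``bookkeeping'' you flag as the main obstacle never arises in the form you fear. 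Relatedly, your mechanism for property 2 (``when a leftover piece shrinks to one ball its color is forced, so reveal it'') is not what happens and is not needed: whenever a piece of $\cQ'_b$ is touched at all, the whole parent query is resolved into $\cQ_0$ by coloring one red and one blue ball in it, so pieces never shrink.

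Two further concrete omissions. First, property 3 needs an explicit repair step that your sketch does not contain: when both $\cQ'_r(i)$ and $\cQ'_b(i)$ would be nonempty, or when $|R(i)|$ would exceed $|B(i)|$ while $\cQ'_r(i)\neq\emptyset$, the paper has Adversary additionally color one ball per offending query (blue in members of $\cQ'_r$, red in members of $\cQ'_b$) until one of the families empties or the sizes rebalance; without some such rule, 3 can fail even if 1, 2, 4--6 hold. Second, your treatment of $\cQ_d$ only creates one-edge components: you propose to ``collapse/merge'' whenever $Q_i$ meets an existing component, but the lemma's star condition 6 is meant to be maintained also when $Q_i$ attaches to a star at its center or to a previously isolated query (the paper then simply places $Q_i$ into $\cQ_d(i)$ with no coloring at all); if instead you always resolve such overlaps by coloring, you must check consistency and balance for configurations where two ``no''-queries share a single ball (which forces an odd, hence unbalanced, batch of revealed colors), and you have not done so. In short: the skeleton matches the paper, but the substance of the lemma --- the explicit answers, the paired colorings, the property-3 rebalancing, and the verification in each case --- is missing.
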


\noindent
Note that by Lemma \ref{lem:strategy1}, $X_0(i)$, $X_d(i)$ and $R(i) \cup B(i) \cup X_b(i)$ are unions of components of $([n],\cQ(i))$ with $[n]=X_0(i) \cup X_d(i) \cup (R(i) \cup B(i) \cup X_b(i))$, and they are disjoint. So if we provide consistent colorings of these components, this will provide a consistent coloring of all the balls. Also note that in $R(i) \cup B(i) \cup X_b(i)$ we only have to color the balls in $X_b(i)$. 
 
We postpone the proof of Lemma \ref{lem:strategy1} to Subsection \ref{prlem}. Before that, we list some basic observations about consistent colorings of the components of $([n],\cQ(i))$, then we examine how an algorithm can end during $\mathtt{STRATEGY1}$, state a similar lemma about $\mathtt{STRATEGY2}$, and  examine how an algorithm can end during $\mathtt{STRATEGY2}$. 

The following observations are consequences of the structure that is maintained by Lemma \ref{lem:strategy1}.

\begin{obs}\label{conscol0}

For any coloring $c$ of $X_0(i)$ we have $c \in con(X_0(i),i)$.

\end{obs}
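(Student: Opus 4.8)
The plan is to show that \emph{no} query of $\cQ(i)$ is a subset of $X_0(i)$; once that is established, $con(X_0(i),i)$ — the set of colorings of $X_0(i)$ consistent with the queries of $\cQ(i)$ contained in $X_0(i)$ — is by definition the set of colorings satisfying an empty family of constraints, i.e.\ \emph{every} coloring of $X_0(i)$, which is exactly the assertion.

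To prove that no query lies inside $X_0(i)$, I would first recall that $\cQ_r(i)$, $\cQ_b(i)$, $\cQ_d(i)$ and $\cQ_0(i)$ partition $\cQ(i)$: a query meeting both $R(i)$ and $B(i)$, or contained in $R(i)$, or contained in $B(i)$, is in $\cQ_0(i)$; a query meeting $R(i)$ but not $B(i)$ and not contained in $R(i)$ is in $\cQ_r(i)$; symmetrically for $\cQ_b(i)$; and a query disjoint from $R(i)\cup B(i)$ is in $\cQ_d(i)$. Then I would run through the cases. If $Q\in\cQ_r(i)$ then $Q\cap R(i)\neq\emptyset$, and since $X_0(i)$ is disjoint from $R(i)$ by its definition, $Q\not\subseteq X_0(i)$; the case $Q\in\cQ_b(i)$ is symmetric using $B(i)$. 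If $Q\in\cQ_d(i)$ then $Q\subseteq X_d(i)=\cup\cQ_d(i)$, and $X_0(i)$ is disjoint from $X_d(i)$, so again $Q\not\subseteq X_0(i)$. Finally, if $Q\in\cQ_0(i)$ then either $Q$ meets $R(i)$ or $B(i)$, or $Q\subseteq R(i)$, or $Q\subseteq B(i)$; in every subcase $Q$ meets $R(i)\cup B(i)$, which is disjoint from $X_0(i)$, so $Q\not\subseteq X_0(i)$. Equivalently, one can phrase the whole argument as: every vertex of $X_0(i)$ is isolated in the query hypergraph $([n],\cQ(i))$, so $X_0(i)$ carries no constraints.

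I do not expect a genuine obstacle here: the statement is essentially bookkeeping built into the definition of $X_0(i)$ as the complement of $R(i)\cup B(i)\cup X_r(i)\cup X_b(i)\cup X_d(i)$. The only points needing a line of care are checking that the four families above really exhaust $\cQ(i)$ and that $X_0(i)$ is disjoint from each of $R(i)$, $B(i)$ and $X_d(i)$ — both are immediate from the definitions — and, if desired, invoking Proposition \ref{unionconsistent} to pass from colorings of the (singleton) components making up $X_0(i)$ to a coloring of $X_0(i)$ as a whole. In particular this argument does not use Lemma \ref{lem:strategy1} at all.
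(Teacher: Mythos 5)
Your main argument is correct and is essentially the paper's proof: the paper disposes of this observation with the single remark that there are no restrictions on the colors of balls in $X_0(i)$, and your case analysis (no query of $\cQ(i)$ is contained in $X_0(i)$, since every query meets $R(i)$, $B(i)$, or lies in $X_d(i)$, all disjoint from $X_0(i)$, so $con(X_0(i),i)$ imposes an empty set of constraints) is just a careful spelling-out of that remark. One caution: your closing ``equivalent'' rephrasing is not actually equivalent and is false in general --- a ball of $X_0(i)$ need not be isolated in $([n],\cQ(i))$, since it may lie in a query of $\cQ_0(i)$ (e.g.\ an uncolored leftover ball of a query handled in Case 1/C with $|Q_i\cap X_0(i-1)|\ge 2$); what is true, and what your main argument uses, is only that no query is \emph{contained} in $X_0(i)$, so drop the isolated-vertex formulation and the proof stands.
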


\begin{proof}

As there are no restrictions for the color of these balls, any coloring is a consistent coloring.

\end{proof}

\begin{obs}\label{conscold} If $X_d(i) \neq \emptyset$, then:

a) for any $x \in X_d(i)$, there is a consistent coloring $c$ of $X_d(i)$, such that $c(x)$ is blue, but there are more red balls than blue (among the balls in $X_d(i)$),

b) there is a consistent coloring of $X_d(i)$ with more blue balls than red balls. 

\end{obs}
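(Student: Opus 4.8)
The plan is to exploit the star structure of $\cQ_d(i)$ guaranteed by Lemma \ref{lem:strategy1}.6, together with Proposition \ref{unionconsistent}, which lets us color the star components of $X_d(i)$ independently. The first step is to record the only constraint that the answers impose on colorings of $X_d(i)$: every $Q \in \cQ_d(i)$ must have received a \textit{no} answer, since a \textit{yes} answer would force Adversary to reveal the colors of all balls of $Q$, contradicting $Q \cap (R(i) \cup B(i)) = \emptyset$. Hence, applying Proposition \ref{unionconsistent} to the star components that partition $X_d(i)$, a coloring of $X_d(i)$ is consistent if and only if no edge is monochromatic within any star component; and we may build a consistent coloring of $X_d(i)$ component by component.

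Next I would show that each star component can be colored \emph{red-heavy}, i.e.\ with strictly more red balls than blue ones. Write such a component as a center $q$ together with edges $Q_1,\dots,Q_m$, where the petals $P_j := Q_j \setminus \{q\}$ are pairwise disjoint of size $k-1 \ge 2$ (if $m=1$ we simply declare any vertex, and later $x$ itself, to be the center). Color $q$ red and, in each $P_j$, color exactly one ball blue and the remaining $k-2$ balls red. Then no edge is monochromatic, so the coloring is consistent, while the red and blue counts are $1 + m(k-2)$ and $m$; since $k \ge 3$ this is $\ge 1 + m > m$. By symmetry between the two colors, each star component can likewise be colored blue-heavy. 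Part (b) is then immediate: color every star component of $X_d(i)$ blue-heavy and take the union.

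For part (a), fix $x \in X_d(i)$ and let $C$ be the star component containing it. If $x$ is the center of $C$, color $x$ blue and every petal ball of $C$ red: each edge contains the blue center and $k-1 \ge 2$ red balls, so the coloring is consistent, and $C$ is red-heavy since it has $m(k-1) > 1$ red balls. If instead $x$ lies in a petal, say $x \in P_1$, color the center of $C$ red, color $x$ blue with the remaining $k-2$ balls of $P_1$ red, and in every other petal color one ball blue and the rest red; this is consistent, and the count $1 + m(k-2) > m$ shows $C$ is red-heavy. Now color every star component of $X_d(i)$ other than $C$ red-heavy as above. The union is a consistent coloring of $X_d(i)$ with $c(x)$ blue, and it has more red balls than blue balls because it is a sum of strictly red-heavy components.

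The only real subtlety, and where one must be slightly careful, is ensuring each component is \emph{strictly} red-heavy (so that the sum over components is still strictly red-heavy) while respecting the non-monochromatic constraint; this is precisely where the hypothesis $k \ge 3$ enters, since for $k=2$ a single $\cQ_d$-edge is forced to be balanced. Handling the two possible positions of $x$ (center versus petal), and treating the degenerate case $m=1$ by declaring $x$ the center, is the remaining bookkeeping.
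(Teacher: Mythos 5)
Your proof is correct and follows essentially the same route as the paper: it uses the star structure of the $\cQ_d(i)$ components from Lemma \ref{lem:strategy1}, colors centers and petals so that each edge is non-monochromatic, and invokes $k\ge 3$ to make the counts strictly unbalanced, splitting part a) into the cases where $x$ is a center or a petal ball. Your component-wise ``red-heavy/blue-heavy'' bookkeeping is just a slightly more explicit version of the paper's argument, so there is nothing to add.
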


\begin{proof}

We know that the connected components of the queries in $\cQ_d(i)$ are stars (Lemma \ref{lem:strategy1}: 6.), and we also know that the answers for the queries in $\cQ_d(i)$ were no.

\smallskip 

To prove a) first let us suppose that $x$ is the center of a star. Then color the center of all stars blue and all the remaining balls red. As $k \ge 3$, we are done in this case.

If $x$ is not a center, then color the center of every component (that are stars) red, and in the remaining part of each query color $1$ ball blue (in the part containing $x$ it should be $x$), and at least $1$ ball red (we again use that $k \ge 3$). 

\smallskip

To prove b) just color the center of every star blue and every other ball red.

\end{proof}

\begin{obs}\label{conscolb} Let us suppose that there is $P \in \cQ'_b(i)$ with $|Q| \neq 2$. Then:

a) for any $x \in X_b(i)$, there is a consistent coloring $c$ of $X_b(i)$, such that $c(x)$ is blue, but there are at least as many red colored balls as blue colored (among the balls in $X_b(i)$), 

b) there is a consistent coloring of $X_b(i)$ with all the balls colored red,

c) there is a consistent coloring of $X_b(i)$ with more blue colored balls than red colored.

\end{obs}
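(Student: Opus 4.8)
The plan is to first determine \emph{exactly} which colourings of $X_b(i)$ are consistent, and then read off the three required colourings by an elementary counting argument. Since $P\in\cQ'_b(i)$, the family $\cQ'_b(i)$ is nonempty, so by Lemma~\ref{lem:strategy1}(3) we have $\cQ'_r(i)=\emptyset$ and hence $X_r(i)=\emptyset$. I claim that a colouring $c$ of $X_b(i)$ lies in $con(X_b(i),i)$ if and only if every $Q'\in\cQ'_b(i)$ has at least one red ball under $c$. For the "only if" part, each $Q\in\cQ_b(i)$ received the answer \emph{no}: a \emph{yes} answer would have revealed the colours of all its balls, and since $Q\cap B(i)\neq\emptyset$ this would force $Q\subseteq B(i)$, contradicting $Q\not\subseteq B(i)$. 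Hence $Q$ is non-monochromatic; as $Q\cap R(i)=\emptyset$, its red ball lies in $Q\setminus B(i)=Q'\in\cQ'_b(i)$. For the "if" part one checks that no query touching $X_b(i)$ imposes a further constraint: by the remark after Lemma~\ref{lem:strategy1}, $R(i)\cup B(i)\cup X_b(i)$ is a union of components of $([n],\cQ(i))$, so any query meeting $X_b(i)$ is contained in it; such a query either meets both $R(i)$ and $B(i)$ (then it is automatically non-monochromatic), or lies in $\cQ_b(i)$ (handled above), or is disjoint from $R(i)\cup B(i)$ and hence lies in $\cQ_d(i)$ — impossible, since $\cQ_d(i)$-members are disjoint from $X_b(i)=\cup\cQ'_b(i)$ by Lemma~\ref{lem:strategy1}(5) — or meets only $R(i)$, which would place it in $\cQ_r(i)$, contradicting $\cQ'_r(i)=\emptyset$. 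Finally, by Lemma~\ref{lem:strategy1}(4) the members of $\cQ'_b(i)$ are pairwise disjoint and partition $X_b(i)$, by (2) each has size at least $2$, and by hypothesis $|P|\ge 3$.

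With this reduction in hand the constructions are immediate. For (b), colour every ball of $X_b(i)$ red; then every member of $\cQ'_b(i)$ contains a red ball, so $c$ is consistent. For (a), let $Q'_0$ be the (unique) member of $\cQ'_b(i)$ containing $x$, colour $x$ blue and all other balls of $X_b(i)$ red; since $|Q'_0|\ge 2$ the set $Q'_0$ still has a red ball, so $c$ is consistent, and there is exactly one blue ball against $|X_b(i)|-1\ge 1$ red balls. For (c), in each member $Q'\in\cQ'_b(i)$ colour one ball red and the remaining $|Q'|-1\ge 1$ balls blue; this $c$ is consistent, and the number of blue balls minus the number of red balls equals $\sum_{Q'\in\cQ'_b(i)}(|Q'|-2)\ge |P|-2\ge 1>0$.

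The part requiring genuine care is the "if" direction of the reduction, i.e.\ verifying that the "one red ball per member of $\cQ'_b(i)$" condition is a \emph{complete} description of $con(X_b(i),i)$; this is precisely where the full strength of Lemma~\ref{lem:strategy1} is used (the component structure, the disjointness of $\cQ_d(i)$ from $X_b(i)$, and the emptiness of $\cQ'_r(i)$). Everything after the reduction is routine, and the hypothesis $|P|\neq 2$ enters only in part (c), where it upgrades the inequality to a strict one.
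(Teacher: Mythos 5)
Your proposal is correct and uses exactly the colorings of the paper's own proof: blue at $x$ and red elsewhere for a), all red for b), and one red ball per member of $\cQ'_b(i)$ with the rest blue for c), where $|P|\ge 3$ gives the strict surplus. The only difference is that you make explicit the characterization of $con(X_b(i),i)$ (each member of $\cQ'_b(i)$ must contain a red ball), which the paper treats as implicit from the structure of Lemma \ref{lem:strategy1}; this is a harmless and accurate elaboration, not a different approach.
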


\begin{proof}
To prove a), just color $x$ blue and every other ball in $X_b(i)$ red.

\smallskip

The proof of b) is obvious.

\smallskip

To prove c) just color one ball from every $P \in \cQ'_b(i)$ red and all the remaining balls from $X_b(i)$ blue. As we know that there is $P \in \cQ'_b(i)$ with $|P| \ge 3$, we are done.
\end{proof}

Now we prove a claim about the case we can find a majority ball during $\mathtt{STRATEGY1}$. We note that to prove Claim \ref{clm:endstrategy1}, we do not use all the properties of $\mathtt{STRATEGY1}$ given in Lemma \ref{lem:strategy1} (some of those properties were there just to maintain the structure in Lemma \ref{lem:strategy1}).

\begin{clm}\label{clm:endstrategy1} During $\mathtt{STRATEGY1}$ we can choose a majority ball or show that there is no majority color only in the following 2 cases:

\vspace{1mm}

1.  $|R(i)|+|B(i)|\ge n-1,$ or

\vspace{1mm}

2.  $|R(i)|=|B(i)|+1$, all $Q \in \cQ'_b(i)$ have $|Q|=2$ (or the same with blue and red reversed), and $|R(i)|+|B(i)|+|X_b(i)|=n$.

\end{clm}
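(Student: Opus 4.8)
The plan is to show that \emph{unless} one of the two listed configurations has been reached, Adversary still has at least two consistent colorings that disagree on which color is in the majority (or on whether there is a majority at all), so we cannot yet name a majority ball or declare a tie. The workhorses are Proposition \ref{unionconsistent} together with the three observations: by Lemma \ref{lem:strategy1} the vertex set splits as $[n]=X_0(i)\cup X_d(i)\cup (R(i)\cup B(i)\cup X_b(i))$ into unions of components, so any choice of consistent colorings on the three pieces glues to a global consistent coloring. I will also use the trivial fact that inside $R(i)\cup B(i)\cup X_b(i)$ only the balls of $X_b(i)$ need to be colored, and that (by property 3 of Lemma \ref{lem:strategy1}) we may assume $\cQ'_r(i)=\emptyset$ and $|R(i)|\ge |B(i)|$, the other case being symmetric. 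Write $r=|R(i)|$, $b=|B(i)|$; by property 1 either $r=b$ or $r=b+1$.

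The main case analysis runs by how large $X_0(i)\cup X_d(i)$ is. \textbf{First suppose $|X_0(i)\cup X_d(i)|\ge 2$.} If $X_0(i)\ne\emptyset$ I can use Observation \ref{conscol0} to color $X_0(i)$ entirely red in one scenario and entirely blue in another, and combine with any fixed consistent coloring of the other two pieces; since $|X_0(i)|$ together with the rest is at least $2$ more than needed to flip the majority, the two scenarios disagree on the majority color (or turn a would-be tie into a non-tie), so no answer is forced. If $X_0(i)=\emptyset$ but $|X_d(i)|\ge 2$ (actually $X_d(i)$, being a nonempty union of star-components each of size $\ge k\ge 3$, has $|X_d(i)|\ge 3$), Observation \ref{conscold}(a),(b) gives a consistent coloring of $X_d(i)$ with strictly more red than blue and one with strictly more blue than red; gluing each to a fixed coloring of the rest again yields two colorings with different majorities. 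Hence in this branch we cannot stop, so if we \emph{can} stop we must have $|X_0(i)|+|X_d(i)|\le 1$, i.e. $r+b+|X_b(i)|\ge n-1$.

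\textbf{Now assume $r+b+|X_b(i)|\ge n-1$.} If $X_b(i)=\emptyset$ then $r+b\ge n-1$, which is case 1, so assume $X_b(i)\ne\emptyset$; then $\cQ'_b(i)\ne\emptyset$ and by property 3, $b\le r$, and by property 2 every $P\in\cQ'_b(i)$ has $|P|\ge 2$. Split on whether some $P\in\cQ'_b(i)$ has $|P|\ge 3$. If so, Observation \ref{conscolb}(b),(c) gives a consistent coloring of $X_b(i)$ that is all red and one with strictly more blue than red; the swing in the difference between red and blue totals across these two colorings is at least $|X_b(i)|+1\ge 3$, which (using $r\ge b$) is enough to flip the sign of $r+|X_b(i)\text{-red}| - (b+|X_b(i)\text{-blue}|)$, so the two colorings disagree on the majority and no answer is forced. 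Therefore, to be able to stop we need every $P\in\cQ'_b(i)$ to have $|P|=2$. In that final subcase each $P\in\cQ'_b(i)$ is a disjoint pair (property 4), consisting of one unknown ball that the answer forces to be blue and one that may be either colour; so $X_b(i)$ has even size and exactly half of it can be pushed into red or into blue freely. The only way this freedom fails to flip the majority is when $r=b+1$ and the leftover balls outside $R(i)\cup B(i)\cup X_b(i)$ number $0$, i.e. $r+b+|X_b(i)|=n$ — which is precisely case 2 — and one still has to check that in case 2 indeed \emph{every} consistent coloring makes red the majority (so we can legitimately answer), by counting: red gets $r$ plus at least $|X_b(i)|/2$, blue gets $b$ plus at most $|X_b(i)|/2$, and $r>b$ closes the gap.

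The step I expect to be the most delicate is the last one: verifying that case 2 (and case 1) really do \emph{permit} a correct answer — i.e. that there every consistent coloring has the same majority — rather than merely failing to be excluded by the flipping arguments; this requires pinning down that the $|P|=2$ pairs give exactly a balanced contribution and that the $+1$ surplus of $R(i)$ is never overturned, and symmetrically handling the red/blue-swapped statement. The counting in the $|X_0(i)|+|X_d(i)|\le 1$ reduction is routine once Observations \ref{conscol0}--\ref{conscolb} are in hand; the care is all in the boundary bookkeeping of the last paragraph.
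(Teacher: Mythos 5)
There is a genuine gap, and it sits at the very foundation of your argument. Your stopping criterion is: ``if two consistent colorings disagree on which color is in the majority (or on whether there is a majority), then we cannot yet name a majority ball.'' That implication is false for balls whose color is still undetermined. If $c_1$ has a red majority and $c_2$ has a blue majority, a ball $x\in X_0(i)\cup X_d(i)\cup X_b(i)$ may be red in $c_1$ and blue in $c_2$, hence a majority ball in \emph{both}; pointing at $x$ would then be a legitimate way to stop, and your two-scenario argument does not exclude it. This is precisely the hard part of the claim: the paper's proof is a case analysis over the possible \emph{outputs} (a ball in $X_0(i)$, $X_d(i)$, $X_b(i)$, $B(i)$, $R(i)$, or the declaration ``no majority''), and for each pointed-at ball of unknown color it constructs a consistent coloring in which that specific ball is \emph{not} in the majority -- typically coloring $x$ blue while making red at least as numerous in every part, using Observations \ref{conscold}(a) and \ref{conscolb}(a)--(b). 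Your proposal never produces such a coloring for a ball in $X_0(i)$, $X_d(i)$ or $X_b(i)$, so Cases 1--3 of the paper's proof have no counterpart in it.

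A secondary problem: even the weaker conclusion you want (two colorings disagreeing on the majority color) is not established as written, because you glue the two colorings of $X_0(i)$ (all red versus all blue) to ``any fixed consistent coloring of the other two pieces.'' That fixed coloring may be heavily red (e.g.\ all of $X_b(i)$ red), in which case both scenarios have a red majority; the parenthetical ``at least $2$ more than needed to flip the majority'' has no justification when $|X_0(i)|$ is small relative to the imbalance of the rest. To make the flip work one must choose the coloring of the other pieces to be balanced or tilted the right way, which is exactly the bookkeeping the paper does case by case. (Your final paragraph, verifying that in case 2 every consistent coloring has a red majority, is correct in spirit but not needed: the claim is only the ``only if'' direction, i.e.\ that stopping forces configuration 1 or 2.)
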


\begin{proof} 

We prove Claim \ref{clm:endstrategy1} by contradiction. Let us suppose that neither $1.$ nor $2.$ holds, and by symmetry and $3.$ of Lemma \ref{lem:strategy1}, we can suppose that we have $|R(i)| \ge |B(i)|$ and $\mathcal{Q}'_r(i)=\emptyset$.
In the following we go through different cases and give consistent colorings of the connected components. Note that by Proposition \ref{unionconsistent} this gives us a consistent coloring of all balls.

\

\textbf{Case 1:} suppose we point to a ball $x \in X_0(i)$ as a majority ball.  Now we give a consistent coloring of the balls, in which $x$ is not a majority ball:

$\bullet_1$ color $x$ blue and every other ball in $X_0(i)$ red (by Observation \ref{conscol0} a)),

$\bullet_2$ color the points $X_d(i)$ (if it is not empty) in such a way that there are more red balls there (by Observation \ref{conscold} a)),

$\bullet_3$ color the balls in $X_b(i)$ (if it is not empty) red (by Observation \ref{conscolb} b)).

\smallskip

\noindent
As $1.$ does not hold we know that either $X_b(i)$ or $X_d(i)$ (or both) is not empty, or $|X_0(i)| \ge 2$. So either there are more red balls in $X_b(i)$ or $X_d(i)$ (or both) and only one blue in $X_0(i)$. Or there are at least as many red balls in each part as blue. So, using that $|R(i)| \ge |B(i)|$, we know that $x$ can not be a majority ball in neither cases.

\ 

\textbf{Case 2:} suppose we point to a ball $x \in X_d(i)$ as a majority ball. Now we give a consistent coloring of the balls, in which $x$ is not a majority ball:

$\bullet$  every ball in $X_0(i)$ red (by Observation \ref{conscol0}),

$\bullet$ color $x$ blue and color the balls $X_d(i)$ in such a way that there are more red balls (by Observation \ref{conscold} a)),

$\bullet$ color the balls in $X_b(i)$ red (by Observation \ref{conscolb} b)).

\smallskip

\noindent
In each part there will be at least as many red ball as blue, so a blue ball can not be majority ball. 

\ 

\textbf{Case 3:} suppose we point to a ball $x \in X_b(i)$ as a majority ball. Now we give a consistent coloring of the balls, in which $x$ is not a majority ball:

$\bullet$  every ball in $X_0(i)$ red (by Observation \ref{conscol0}),

$\bullet$ color the balls $X_d(i)$ (if it is not empty) in such a way that there are more red balls (by Observation \ref{conscold} a)),

$\bullet$ color $x$ blue and the balls in $X_b(i)$ in such a way that there are at least as many red balls in $X_b(i)$ as blue balls (by Observation \ref{conscolb} a)).

\smallskip

\noindent
In each part there will be at least as many red ball as blue, so a blue ball can not be majority ball. 

\ 

\textbf{Case 4:} suppose we point to a ball $x \in B(i)$ as a majority ball. Now we give a consistent coloring of the balls, in which $x$ is not a majority ball:

$\bullet$ color every ball in $X_0(i)$ red (by Observation \ref{conscol0}),

$\bullet$ color the points $X_d(i)$ (if it is not empty) in such a way that there are more red balls (by Observation \ref{conscold} a)),

$\bullet$ color the balls in $X_b(i)$ red (by Observation \ref{conscolb} b)).

\smallskip

\noindent
In each part there will be at least as many red ball as blue, so a blue ball can not be majority ball. 

\ 

\textbf{Case 5:} suppose we point to a ball $x \in R(i)$ as a majority ball. Now we give a consistent coloring of the balls, in which $x$ is not a majority ball:

$\bullet_1$ color every ball in $X_0(i)$ blue (by Observation \ref{conscol0}),

$\bullet_2$ color the points $X_d(i)$ (if it is not empty) in such a way that there are more blue balls (by Observation \ref{conscold} b)), 

$\bullet_3$ color the balls in $X_b(i)$ in such a way that there at least as many blue balls as red; if there is $Q \in \cQ'_b(i)$ with $|Q| \ge 3$, color the balls in $X_b(i)$ with more blue colored balls than red colored. (by Observation \ref{conscolb} c)).

\smallskip

\noindent
We know by Lemma $\ref{lem:strategy1}$ that $|R(i)| \le |B(i)|+1$, and we know by $\bullet_1$, $\bullet_2$ and $\bullet_3$ that $X_0(i), X_b(i)$ and $X_d(i)$ contains as many blue balls as red ones. 

If either $X_d(i)$ or $X_0(i)$ is not empty then
we are done by $\bullet_1$ or $\bullet_3$. If both of them is empty, then, since $2.$ does not hold then we are done by $\bullet_3$.

\ 

\textbf{Case 6:} finally suppose that we state that there is no majority ball. Now we give a consistent coloring of the balls, there exists a majority ball:

$\bullet_1$ color every ball in $X_0(i)$ red (by Observation \ref{conscol0}),

$\bullet_2$ color the points $X_d(i)$ (if it is not empty) in such a way that there are more red balls (by Observation \ref{conscold} a)),

$\bullet_3$ color the balls in $X_b(i)$ red (by Observation \ref{conscolb} b)).

\smallskip

\noindent
We know that some of $X_0(i), X_d(i)$ or $X_b(i)$ is not empty and that means that there will be more red balls at the end.

\end{proof}

We mention again that Case 2 of Claim \ref{clm:endstrategy1} will not actually occur, as Adversary switches to $\mathtt{STRATEGY2}$ in case it would happen. This is what we call the end of $\mathtt{STRATEGY1}$.

\begin{lemma} \label{lem:strategy2} If Case 2 of Claim \ref{clm:endstrategy1} would happen in the $j$th round during $\mathtt{STRATEGY1}$, then from the $j$th round Adversary could answer - following $\mathtt{STRATEGY2}$ - in a way that in the $i$th round (for $j\le i$) the following hold:

\vspace{2mm}

1. $|R(i)|+1=|B(i)|,$

\vspace{1mm}

2. for $Q \in \cQ'_b(i)$ we have $|Q|=2,$ and

\vspace{1mm}

3. $|R(i)|+|B(i)|+|X_b(i)|=n$.

\end{lemma}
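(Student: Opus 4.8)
The plan is to define $\mathtt{STRATEGY2}$ and prove 1.--3.\ simultaneously by induction on $i\ge j$, the base case $i=j$ being the round in which Adversary switches strategies and the inductive step $i-1\to i$ being a single round of $\mathtt{STRATEGY2}$. Throughout $\mathtt{STRATEGY2}$ Adversary will in fact maintain a slightly stronger invariant than 1.--3.: in addition he keeps $\mathcal{Q}_r(i)=\emptyset$ and $X_0(i)=X_d(i)=\emptyset$ (both are forced by 3.\ anyway), keeps the sets in $\mathcal{Q}'_b(i)$ pairwise disjoint, and keeps the query associated to each member of $\mathcal{Q}'_b(i)$ answered ``no'' (so, since its part inside $B(i)$ is nonempty and blue, that $2$-element set contains at least one red ball). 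Hence $X_b(i)$ is partitioned into $2$-element ``pairs'', each containing at least one red ball, and one checks easily that a full coloring of $[n]$ is consistent with $\mathcal{Q}(i)$ if and only if it respects the colors of $R(i)\cup B(i)$ and colors each pair with at least one red; this characterization is what keeps the bookkeeping below routine.

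For the base case I would first unpack, from the definition of $\mathtt{STRATEGY1}$ given during the proof of Lemma~\ref{lem:strategy1}, the shape of the configuration in round $j-1$: Lemma~\ref{lem:strategy1} holds there, and answering $Q_j$ the way $\mathtt{STRATEGY1}$ prescribes would land in Case~2 of Claim~\ref{clm:endstrategy1}, i.e.\ in $|R(j)|=|B(j)|+1$, all members of $\mathcal{Q}'_b(j)$ of size $2$, $|R(j)|+|B(j)|+|X_b(j)|=n$. Adversary instead gives the opposite yes/no answer to $Q_j$ and chooses the colors he then reveals so as to reach the mirror configuration with $|B(j)|=|R(j)|+1$ (the pairs still sitting in $\mathcal{Q}'_b(j)$), which is exactly 1.--3.\ together with the strengthened invariant. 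The one nontrivial point is to check that this answer and the revealed colors are consistent with all answers of rounds $1,\dots,j-1$; this is done by a case analysis of the same flavour as in Claim~\ref{clm:endstrategy1}, using the structural properties of Lemma~\ref{lem:strategy1} to identify the few earlier queries that could possibly be violated. I expect this transition round to be the main obstacle, since it is the only place where the two strategies interact and so must be set up hand in hand with the proof of Lemma~\ref{lem:strategy1}.

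For the inductive step, assume the invariant holds after round $i-1$ and $Q_i$ is asked. If $Q_i\subseteq R(i-1)\cup B(i-1)$, Adversary answers ``yes'' or ``no'' according to whether $Q_i$ is monochromatic in the known colors and reveals nothing new, and nothing in the configuration changes. Otherwise $Q_i$ meets $X_b(i-1)$, hence meets some pairs $P_1,\dots,P_t$; Adversary reveals the colors of every ball of $P_1\cup\dots\cup P_t$, colouring each $P_\ell$ with exactly one red and one blue ball, and answers ``no'' to $Q_i$. Two small things need checking: (a) the choice of which ball of each $P_\ell$ is red can be made so that $Q_i$ ends up non-monochromatic --- this is where $k\ge 3$ enters, via a short case distinction on whether $Q_i$ already contains a known ball of each colour, entirely parallel to the cases of Claim~\ref{clm:endstrategy1}; and (b) the move is consistent, because the only ``no''-queries meeting the newly coloured balls are $Q_i$ itself (non-monochromatic by (a)) and the queries whose non-blue part is one of $P_1,\dots,P_t$ (non-monochromatic since each $P_\ell$ keeps a red ball), so the consistency characterization of the first paragraph still holds. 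Finally 1.\ is preserved because $|R(i)|$ and $|B(i)|$ each grow by $t$; 3.\ is preserved because the balls of $P_1\cup\dots\cup P_t$ pass from $X_b$ into $R\cup B$ while $Q_i$ contributes no new balls and $X_0(i)=X_d(i)=\emptyset$ persists; and 2.\ (with $\mathcal{Q}_r(i)=\emptyset$ and disjointness) is preserved because the untouched pairs are unchanged and $Q_i$, now meeting both $R(i)$ and $B(i)$, lands in $\mathcal{Q}_0(i)$ rather than creating a new member of $\mathcal{Q}_b(i)$ or $\mathcal{Q}_r(i)$. This closes the induction.
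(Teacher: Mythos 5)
Your plan is essentially the paper's proof: the paper also defines $\mathtt{STRATEGY2}$ by a case analysis on how the new query meets the partition $[n]=R(i-1)\cup B(i-1)\cup X_b(i-1)$, answers ``no'' while colouring intersected pairs with one red and one blue ball (so every touched query lands in $\cQ_0(i)$), and observes that properties 1--3 are preserved in each case; the transition round is handled inside the description of $\mathtt{STRATEGY1}$, exactly in the ``hand in hand'' way you anticipate. Two remarks. First, your anticipated main obstacle at the switch is in fact trivial in the paper: the only case of $\mathtt{STRATEGY1}$ that can end in Case 2 of Claim \ref{clm:endstrategy1} is the one (Case 2/D/a) where $Q_j$ meets $B(j-1)$ and otherwise only two hitherto unconstrained balls of $X_0(j-1)$; flipping the answer to ``yes'' just colours those two free balls blue, so consistency with rounds $1,\dots,j-1$ is immediate and no Claim-\ref{clm:endstrategy1}-style case analysis is needed. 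Second, your inductive step colours \emph{every} pair met by $Q_i$, whereas the paper colours at most two (one in Cases A/a, A/b, two in Case A/c), leaving the other intersected pairs untouched in $\cQ'_b(i)$. For the present lemma your variant is fine (each round adds equally many red and blue balls, so 1--3 hold), but $\mathtt{STRATEGY2}$ is the same object referred to in Lemma \ref{lem:jkstep}: colouring $t$ pairs while retiring $t+1$ queries is a $(2t,t+1)$-step, which violates the ratio $j/k\le 4/3$ as soon as $t\ge 3$. So the paper's more parsimonious choice is not incidental, and your version would have to be restricted to at most two pairs per round to serve the rest of the argument.
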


\noindent
Now we prove a claim about the end of the algorithm during $\mathtt{STRATEGY2}$. The situation is kind of analogous to Claim \ref{clm:endstrategy1}:

\begin{clm}\label{cl21} During $\mathtt{STRATEGY2}$ we can choose a majority ball or show that there is no majority color only in case $|R(i)|+|B(i)| = n$.
\end{clm}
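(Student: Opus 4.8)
The plan is to mirror the proof of Claim \ref{clm:endstrategy1}, but now under the much stronger structural hypotheses guaranteed by Lemma \ref{lem:strategy2}: in the $i$th round of $\mathtt{STRATEGY2}$ we have $|R(i)|+1=|B(i)|$, every $Q\in\cQ'_b(i)$ has size exactly $2$, and $R(i)\cup B(i)\cup X_b(i)=[n]$. In particular $X_0(i)$ and $X_d(i)$ are empty, so the only balls whose color is undetermined are those in $X_b(i)$, and each $P\in\cQ'_b(i)$ is a $2$-element set disjoint from the others (by property 4 of Lemma \ref{lem:strategy1}, which $\mathtt{STRATEGY2}$ also maintains), forcing exactly one of its two elements to be blue. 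So the number of blue balls among $X_b(i)$ equals $|\cQ'_b(i)|$, which is exactly $|X_b(i)|/2$.

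First I would show that if $|R(i)|+|B(i)|<n$, i.e. $X_b(i)\neq\emptyset$, then no move of ours is valid. Suppose we point to some ball $x$ as a majority ball. If $x\in R(i)$, color one chosen endpoint of each pair in $\cQ'_b(i)$ red and the other blue, choosing $x$'s pair (if $x$ were in $X_b(i)$ — but here $x\in R(i)$) arbitrarily; this yields $|X_b(i)|/2$ red and $|X_b(i)|/2$ blue balls in $X_b(i)$, so the total counts are $|R(i)|+|X_b(i)|/2$ red and $|B(i)|+|X_b(i)|/2=|R(i)|+1+|X_b(i)|/2$ blue, hence blue is the strict majority and a red ball cannot be a majority ball. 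If $x\in B(i)$, do the same coloring; now blue is still the majority but $x$ is blue, so I instead need the opposite balance — recolor so that red beats blue among $X_b(i)$ by at least $2$. But each pair contributes exactly one red and one blue, so this is impossible from $\cQ'_b(i)$ alone; however I can exploit that we have not yet committed the yes/no answers, or rather use that $|R(i)|=|B(i)|-1$ so a perfectly balanced $X_b(i)$ already makes red and blue tie — wait, that gives blue majority by $1$. The correct route: for $x\in B(i)$, note that since all pairs are disjoint $2$-sets, I may instead give a consistent coloring making \emph{red} the majority by choosing, in the pair structure, which side is which — actually each pair must have exactly one blue, so the blue count in $X_b(i)$ is fixed at $|X_b(i)|/2$ and cannot be reduced. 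The resolution is that when $|R(i)|+|B(i)|<n$ we also have freedom in earlier-unanswered structure; more cleanly, this is precisely why Claim \ref{clm:endstrategy1}'s Case~2 was the stopping point, and $\mathtt{STRATEGY2}$ is designed so that the roles symmetrize: one checks that with $X_b\neq\emptyset$ there are consistent colorings with red strictly majority \emph{and} with blue strictly majority (by reassigning the $R$/$B$ labels of the known classes, or by the fact that the underlying yes/no answers are symmetric under global color swap — any $c\in con([n],i)$ has $\bar c\in con([n],i)$). Hence both colors can be the majority, and also "no majority" is refuted, so no declaration is valid.

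Next I would show conversely that when $|R(i)|+|B(i)|=n$ we \emph{can} finish: then $X_b(i)=\emptyset$, every ball's color is known, $R(i)$ and $B(i)$ partition $[n]$ with $|B(i)|=|R(i)|+1$, so every ball in $B(i)$ is a majority ball and we simply name one. This direction is immediate.

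The main obstacle is the $x\in B(i)$ (equivalently $x\in R(i)$, whichever is the \emph{larger} known class) subcase: because the size-$2$ pairs rigidly force a balanced split of $X_b(i)$, one cannot naively rebalance $X_b(i)$ to dethrone a ball of the majority known color, so the argument must invoke the global color-swap symmetry of the yes/no responses (every consistent coloring's complement is consistent) together with the fact that $\mathtt{STRATEGY2}$ keeps $X_d(i)=X_0(i)=\emptyset$ and $||R(i)|-|B(i)||=1$; I would make sure to state explicitly which consistent coloring (the "swap" one, sending $R(i)\leftrightarrow B(i)$ and flipping each pair) certifies that a ball of either known color fails to be majority, and that the same pair of colorings refutes "no majority ball". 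Everything else is bookkeeping parallel to Claim \ref{clm:endstrategy1}.
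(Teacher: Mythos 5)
Your structural premise about $\cQ'_b(i)$ is wrong, and this is exactly what derails the $x\in B(i)$ case. Each pair $P'\in\cQ'_b(i)$ is the uncolored part of a query $Q\in\cQ_b(i)$ that meets $B(i)$, avoids $R(i)$, and received the answer ``no''; consistency with ``no'' only requires that $Q$ is not monochromatic, i.e.\ that \emph{at least one} ball of $P'$ is red. (The paper's remark after the definition of $\cQ'_r(i)$ says precisely this, with colors swapped.) So the blue count inside $X_b(i)$ is \emph{not} pinned to $|X_b(i)|/2$: coloring \emph{both} balls of every pair red is a perfectly consistent coloring. That single observation is what the paper uses: for $x\in B(i)$ (and for a ``no majority'' declaration) Adversary colors all of $X_b(i)$ red, making red the strict majority since $|B(i)|=|R(i)|+1$ and $|X_b(i)|\ge 2$; for $x\in R(i)$ or $x\in X_b(i)$ he colors one ball per pair red (including $x$) and one blue, making blue the strict majority. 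Your rigid ``exactly one blue per pair'' reading makes the $x\in B(i)$ subcase look impossible, which it isn't.

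The patch you reach for --- global color-swap symmetry, ``any $c\in con([n],i)$ has $\bar c\in con([n],i)$'' --- is false in this setting. Adversary's answers include the revealed colors of balls (that is the whole mechanism by which $R(i)$ and $B(i)$ are nonempty), and the paper explicitly notes that only in the absence of such revelations would flipping all colors preserve consistency. Once any ball's color has been announced, the complement of a consistent coloring is inconsistent, so the symmetry argument cannot certify that either color can be the majority. The correct and much simpler route is the one above: two explicit colorings of the pairs (all-red, and one-red-one-blue per pair), both consistent because each pair only needs at least one red ball, cover all five declarations ($x$ in $X_b(i)$, $R(i)$, $B(i)$, or ``no majority'') whenever $X_b(i)\neq\emptyset$. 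Your converse direction ($|R(i)|+|B(i)|=n$ allows naming a ball of $B(i)$) is fine but not what the claim asserts; only the necessity direction needs proof.
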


\begin{proof} We prove Claim \ref{clm:endstrategy1} by contradiction. Let us suppose that $|R(i)|+|B(i)| < n$. In each of the following cases we give a consistent coloring of $X_b(i)$, that is a consistent coloring of all the balls, that will lead to a contradiction.

\medskip

\textbf{Case 1:} suppose we point a ball $x \in X_b(i)$ as a majority ball. Now we give a consistent coloring of the balls, in which $x$ is not a majority ball:

$\bullet$ we color in every $P \in \cQ'_b(i)$ 1 ball (including $x$) red and 1 ball blue.

\medskip

\textbf{Case 2:} suppose we point to a ball $x \in R(i)$ as a majority ball. Now we give a consistent coloring of the balls, in which $x$ is not a majority ball:

$\bullet$ we color in every $P \in \cQ'_b(i)$ 1 ball red and 1 ball blue.

\medskip

\textbf{Case 3:} suppose we point to a ball $x \in B(i)$ as a majority ball. Now we give a consistent coloring of the balls, in which $x$ is not a majority ball:

$\bullet$ we color every ball in $X_b(i)$ red.

\medskip

\textbf{Case 4:} suppose we do not point to any balls. Now we give a consistent coloring of the balls, in which $x$ is not a majority ball:

$\bullet$ we color every ball in $X_b(i)$ red.

\end{proof}

\noindent
Now we know by Claim \ref{clm:endstrategy1} and Claim \ref{cl21}, that we can choose a majority ball only if we know the color of almost all the balls, so Adversary's strategy
can be that he colors as few balls as possible in each round.
The following definition captures this property.

\begin{defn}
We call the $i^{th}$ round a $(j,k)$-step, if Adversary puts $k$ queries into $\cQ_0(i)$ (that were not in $\cQ_0(i-1)$) with the coloring of $j$ balls (i.e. altogether $j$ new balls are put into $R(i) \cup B(i)$).

\end{defn}

\begin{lemma}\label{lem:jkstep}

Adversary can answer during $\mathtt{STRATEGY1}$ and $\mathtt{STRATEGY2}$ such that each round is a $(j,k)$-step with $j/k \le 4/3$, or a $(0,0)$-step.

\end{lemma}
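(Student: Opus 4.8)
The plan is to finish specifying the colouring part of Adversary's answers in $\mathtt{STRATEGY1}$ and $\mathtt{STRATEGY2}$ — which was only sketched when those strategies were defined inside the proofs of Lemma~\ref{lem:strategy1} and Lemma~\ref{lem:strategy2} — and then verify the $(j,k)$-bookkeeping one round at a time. Fix round $i$ and write $Q=Q_i$. By Lemma~\ref{lem:strategy1} the picture before round $i$ is: the coloured set $R(i-1)\cup B(i-1)$ together with the useless queries $\cQ_0(i-1)$ hanging off it, the pairwise disjoint light parts in $\cQ'_r(i-1)\cup\cQ'_b(i-1)$ (each of size at least $2$ and attached to whichever colour class is smaller), and a disjoint union of stars forming $\cQ_d(i-1)$. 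I would organise the whole proof as a case split on how $Q$ meets this picture.

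First I would isolate the rounds that are $(0,0)$-steps or $(0,k)$-steps, both of which trivially satisfy $j/k=0\le 4/3$: if $Q\subseteq R(i-1)\cup B(i-1)$, or $Q$ meets both colour classes, the answer is forced, only $Q$ enters $\cQ_0$ and nothing is coloured; and if $Q$ is disjoint from the coloured region and from all light parts and meets the current stars only in a way that keeps the enlarged $\cQ_d$ a union of stars (a new isolated query, a query through the centre of one star, or a second query forming a new two-edge star with one old edge), Adversary answers \textsf{no}, no ball is coloured and $\cQ_0$ does not grow. The content is the complementary case, in which adding $Q$ would break one of the invariants 2.--6. of Lemma~\ref{lem:strategy1} — it links two components, stabs a star off its centre, touches a light part, or would shrink a light part to size $1$. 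There Adversary repairs the invariants by colouring, and the set $\cF$ of queries driven into $\cQ_0$ this round consists of $Q$ together with all edges of the stars and all the $Q'$-parts of the light parts that $Q$ disturbed.

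The heart of the estimate is choosing \emph{where} to colour. For a star on $t$ edges with centre $q$, colouring $q$ one colour and one further ball of each edge the other colour puts all $t$ edges into $\cQ_0$ at a cost of only $t+1$ coloured balls; when such a star is swept up together with the incoming $Q$, one re-uses the common point of $Q$ with the star (its centre, or the stabbing point, depending on the case) as one of those $t+1$ balls, and one re-uses the at most two points where $Q$ meets two distinct disturbed pieces as a bichromatic pair that already settles $Q$ itself, so $Q$ costs no extra ball. Carrying this out over all disturbed pieces gives a bound of the shape $j\le |\cF|+(\text{number of disturbed pieces})-1$, and a short arithmetic check then shows $j/k=j/|\cF|\le 4/3$ as soon as the disturbed stars and light parts contain at least three edges in total; at the same time, assigning the ``centre colour'' of the successive disturbed pieces so as always to feed the currently smaller colour class keeps $||R(i)|-|B(i)||\le 1$, which is exactly invariant~1 that Lemma~\ref{lem:strategy1} needs in order to continue.

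The main obstacle is the genuinely small repairs, where the count above only gives ratio $3/2$ or $2$, together with the need to keep $R(i),B(i)$ balanced \emph{while} repairing. I expect these to be handled by showing they never force a wasteful round: a one-edge star stabbed in a single point is itself a two-edge star, hence no repair is needed; a one-edge star stabbed in two or more points is repaired by colouring one red and one blue ball inside the intersection $Q\cap Q^{1}$ — legal since $k\ge 3$ — which gives $j=k=2$; and the one configuration still too costly to repair inside $\mathtt{STRATEGY1}$, namely the one that would create a light part of size $1$, is precisely the configuration at which Adversary abandons $\mathtt{STRATEGY1}$ (Case~2 of Claim~\ref{clm:endstrategy1}, Lemma~\ref{lem:strategy2}), so it is never charged as a $\mathtt{STRATEGY1}$ round. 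Finally the same economical-colouring bookkeeping must be redone, more easily, inside $\mathtt{STRATEGY2}$, where every light part has size exactly $2$: absorbing a query that meets $p\ge1$ of the pairwise disjoint $2$-element parts costs one new ball per part (the other ball of the part being re-used) plus at most the two points where $Q$ meets distinct parts, again yielding $j/k\le 4/3$ once $k\ge 2$.
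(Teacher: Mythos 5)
Your overall framework (a round-by-round case analysis of how $Q_i$ meets the structure of Lemma \ref{lem:strategy1}, repairing invariants by colouring, and charging the coloured balls against the queries retired into $\cQ_0(i)$) is the paper's framework, but your central device for the key case is different and does not work. When $Q_i$ disturbs a star of $\cQ_d(i-1)$ you propose to sweep the \emph{whole} star into $\cQ_0(i)$ by colouring its centre one colour and one further ball of each of its $t$ edges the other colour. That indeed gives a good ratio, but it colours $1$ ball of one colour against $t$ of the other in a single round, so for $t\ge 3$ it makes $\bigl||R(i)|-|B(i)|\bigr|\ge 2$ and destroys property 1 of Lemma \ref{lem:strategy1}, which Claim \ref{clm:endstrategy1} (and hence the whole lower bound) needs. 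Your remedy of ``feeding the smaller colour class with the centre colour'' only balances \emph{across} different disturbed pieces; it cannot balance inside one large star, and there need not be any light parts available for rebalancing (e.g.\ when the first queries form a single big star which is then stabbed, $\cQ'_r(i-1)=\cQ'_b(i-1)=\emptyset$). Patching by colouring an extra ball per edge to restore balance pushes $j/k$ towards $2$, so the $4/3$ bound is lost. The paper avoids this dilemma: it retires only $Q_i$ together with the at most two earlier queries that $Q_i$ actually meets (Cases 1/E, 2/C, 3/A, 3/B, 4/B, 5/C), always colouring equally many red and blue balls — hence only $(1,1)$-, $(2,2)$- and $(4,3)$-steps — and the untouched sibling edges of a stabbed star are \emph{not} killed but re-filed into $\cQ_r(i)\cup\cQ_b(i)$ (legal since their uncoloured parts have size $k-1\ge 2$), with property 3 restored by the generic trick of colouring one ball in a light part, each such ball retiring one more query, i.e.\ turning a $(j,k)$-step into a $(j+t,k+t)$-step, which preserves $j/k\le 4/3$. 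Your proposal has no substitute for this ``park the siblings as light parts'' mechanism, and that is the missing idea.

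Two further inaccuracies, less central but worth fixing: the configuration that would create a light part of size $1$ is not where Adversary abandons $\mathtt{STRATEGY1}$ — it is handled inside $\mathtt{STRATEGY1}$ by answering \emph{yes} and colouring that single ball, a cheap $(1,1)$-step (Case 2/D/b); the switch to $\mathtt{STRATEGY2}$ happens at a $(0,0)$-step (Case 2/D/a), precisely when Case 2 of Claim \ref{clm:endstrategy1} would be reached, and consists of flipping that answer to \emph{yes}. Also, your list of invariant-violating configurations omits the rounds in which $Q_i$ attaches to a colour class together with fresh balls: when $Q_i$ meets $R(i-1)$ (the larger class) but no blue ball, property 3 forces Adversary to colour one fresh ball blue (Case 1/C), another $(1,1)$-step. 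These omissions do not threaten the ratio, but as written your case analysis does not cover all rounds, and the main gap described above means the proof as proposed does not establish the lemma.
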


Note that if a ball is in $R(i)$, then it is in $R(j)$ for $j>i$ (similarly, if a ball is in $B(i)$, then it is in $B(j)$ for $j>i$). Lemma \ref{lem:jkstep} with Case 1 of Claim \ref{clm:endstrategy1} and with Claim \ref{cl21} immediately implies Theorem \ref{ayesno}. Indeed, if step $s$ is the last one, altogether at least $n-1$ balls must be put into $R(i)\cup B(i)$ for some $i\le s$, thus at least $3(n-1)/4$ queries must be put into $\cQ_0(i)$ for $i\le s$ altogether. Note that every query is put into $\cQ_0(i)$ for some $i$ at most once. Thus there are at least $3(n-1)/4$ queries that do not result in $(0,0)$-steps. In addition it is going to be obvious from the description of $\mathtt{STRATEGY1}$ that the first two queries are $(0,0)$-steps.

\subsubsection{Proof of the lemmas}\label{prlem}

\begin{proof}

In this section we will simultaneously prove Lemma \ref{lem:strategy1}, Lemma \ref{lem:strategy2} and Lemma \ref{lem:jkstep} by defining $\mathtt{STRATEGY1}$ and $\mathtt{STRATEGY2}$.

We define what Adversary answers in the $i^{th}$ round ($i\ge 1$) by a case-by-case analysis of the intersection of the new query $Q_i$ and the earlier queries. In each case we provide a description of the answer which contains: the answer of Adversary (yes/no), the color of some balls, which queries (of $\cQ(i)$) go to $\cQ_0(i)$ after this round, can it be the end of $\mathtt{STRATEGY1}$ and for which $j$ and $k$ it is a $(j,k)$-step.
 
\vspace{2mm}

Recall that by symmetry we can assume that $\mathcal{Q}_r'(i-1)$ is empty (and $|R(i-1)|\ge |B(i-1)|$). Now we describe how Adversary answers to $Q_i$. 

\vspace{2mm}

About the proof of the lemmas:

$\bullet$ In each case maintaining properties 1,2,4,5 and 6 of Lemma \ref{lem:strategy1} will be obvious from the description. We illustrate this in Case 1/E, but omit the details in all the other cases.

$\bullet$ After the $i^{th}$ round, property 3 of Lemma \ref{lem:strategy1} could be violated by two reasons: the first one is that neither $\mathcal{Q}_b'(i)$ nor $\mathcal{Q}_r'(i)$ is empty. In this situation we color equal many balls (one in each set) in $\cQ'_r(i)$ blue and $\cQ'_b(i)$ red so that either $\mathcal{Q}_b'(i)$ or $\mathcal{Q}_r'(i)$ becomes empty. We color $2t$ balls for some $t$, and we can move the corresponding $2t$ queries to $\cQ_0(i)$. The other reason is that $|R(i)|$ becomes larger than $|B(i)|$, while $\mathcal{Q}_r'(i)$ is not empty. Note that by the above, we can assume that $\mathcal{Q}_b'(i)$ is empty. Then, in this situation we can pick an arbitrary member of $\mathcal{Q}_r'$ and color one of its elements blue. By repeating this, either $\mathcal{Q}_r'(i)$ becomes empty, or $|B(i)|$ becomes at least as large as $|R(i)|$. Here we move $t$ queries to $\cQ_0(i)$ by coloring $t$ balls. If any of these occurs at a round, this changes a $(j,k)$-step to a $(j+t,k+t)$-step. It is easy to see that if $j/k \le 4/3$, then $(j+t)/(k+t)\le 4/3$. Note that this cannot be the end of $\mathtt{STRATEGY1}$, as some balls move to $X_0(i)$.

\vspace{4mm}

$\mathtt{STRATEGY1}$

\vspace{3mm}

\textbf{Case 1:}  $Q_i \cap R(i-1) \neq \emptyset$. 

\vspace{2mm}

\hspace{3mm} \textbf{Case 1/A:} $Q_i \subset R(i-1)$. 

\vspace{2mm}
The answer of Adversary is yes, we do not color any ball (since the color of the balls in $Q_i$ is red), only $Q_i$ becomes a new element of $\cQ_0(i)$, so it is a $(0,1)$-step, and it can not be the end of $\mathtt{STRATEGY1}$.

\vspace{3mm}

\hspace{3mm} \textbf{Case 1/B:} $Q_i \cap B(i-1) \neq \emptyset$. 

\vspace{2mm}

The answer of Adversary is no, we do not color any ball, only $Q_i$ becomes a new element of $\cQ_0(i)$, so it is a $(0,1)$-step, and it can not be the end of $\mathtt{STRATEGY1}$.

\vspace{3mm}

\hspace{3mm} \textbf{Case 1/C:} $Q_i \cap B(i-1)=\emptyset, \ Q_i \cap X_0(i-1) \neq \emptyset.$ 

\vspace{2mm}

The answer of Adversary is no, we color one ball in $Q_i \cap X_0(i-1)$ blue, only $Q_i$ becomes a new element of $\cQ_0(i)$, so it is a $(1,1)$-step, and it can not be the end of $\mathtt{STRATEGY1}$.

\vspace{3mm}

\hspace{3mm} \textbf{Case 1/D:} $Q_i \cap B(i-1)=\emptyset, \ Q_i \cap X_0(i-1)= \emptyset, \ Q_i \cap X_b(i-1) \neq \emptyset$. 

\vspace{1mm}

In this case we know that $Q_i$ intersects a set $P' \in \cQ_b'(i-1)$ that is a subset of a query $P \in \cQ_b(i-1)$, and we know that $|P'|\ge 2$.

\vspace{2mm}

The answer of Adversary is no, we color a ball in $Q_i \cap P'$ blue and another in $P'$ red (using that $|P'| \ge 2$), thus $Q_i$ and $P$ become new elements of $\mathcal{Q}_0(i)$, so it is a (2,2)-step, and it can not be the end of $\mathtt{STRATEGY1}$. 

\vspace{3mm}

\hspace{3mm} \textbf{Case 1/E:} $Q_i \cap B(i-1)=\emptyset, \ Q_i \cap X_{0}(i-1)= \emptyset, \ Q_i \cap X_b(i-1) = \emptyset, \ Q_i \cap X_{d}(i-1) \neq \emptyset$. 

\vspace{1mm}

We know that there is $p \in Q_i \cap X_d(i-1)$. Thus there is $P \in \cQ_{d}(i-1)$ with $p \in P$.

\vspace{2mm}

If the component of $P$ contains more queries, there is a center in it. If $p$ is not the center, let $q$ be the center, otherwise let $q \in P \setminus \{p\}$ be arbitrary.
The answer of Adversary is no, we color $p$ blue and $q$ red, thus $Q_i$ and $P$ becomes a new element of $\mathcal{Q}_0(i)$. If there are other queries in the component of $P$, those other queries move to either $\cQ_r(i)$ or $\cQ_b(i)$. It is a $(2,2)$-step, and it can not be the end of $\mathtt{STRATEGY1}$.

\vspace{2mm}

Here we show why properties 1,2,4,5 and 6 of Lemma \ref{lem:strategy1} are satisfied in this case. We know that they are satisfied after round $i-1$, monitor only the changes. Adversary colors $1$ ball red and $1$ ball blue, this implies the first property. If an earlier query $Q_j$ is moved to $\cQ_r(i)$ or $\cQ_b(i)$, it was in $\cQ_d(i-1)$, and only one of its elements gets colored in this round, thus its uncolored part has size at least $2$, proving the second property. 

The only new query $Q_i$ is moved to $\mathcal{Q}_0(i)$, not $\mathcal{Q}'_b(i)\cup \cQ'_r(i)$. As sets in $\mathcal{Q}'_b(i-1)\cup \cQ'_r(i-1)\cup \cQ_d(i-1)$ are pairwise disjoint, the only way the fourth property could be violated is if two queries would be moved from $\cQ_d(i-1)$ to $\mathcal{Q}_b(i)\cup \cQ_r(i)$ that intersect in an uncolored vertex, but it is impossible, as they only intersect in the center, which becomes colored.
Similarly, if there is any set in $\cQ'_r(i)\setminus \cQ'_r(i-1)$ or $\cQ'_b(i)\setminus \cQ'_b(i-1)$, then all the queries from the component of $P$ are moved there, thus they are disjoint from other queries in $\cQ_d(i)$, showing the fifth property holds.

Finally, there is no query in $\cQ_d(i)\setminus \cQ_d(i-1)$, thus the sixth property holds.

\vspace{1cm}

\textbf{Case 2:} $Q_i \cap R(i-1) = \emptyset, \ Q_i \cap B(i-1) \neq \emptyset$. 

\vspace{2mm}

\hspace{3mm}\textbf{Case 2/A:} $Q_i \subset B(i-1)$.

\vspace{2mm}

The answer of Adversary is yes, we do not color any new ball, only $Q_i$ becomes a new member of $\cQ_0(i)$, so it is a $(0,1)$-step, and it can not be the end of $\mathtt{STRATEGY1}$.

\vspace{3mm}

\hspace{3mm}\textbf{Case 2/B:} $Q_i \cap X_b(i-1) \neq \emptyset$. 

\vspace{1mm}

We know that $Q_i$ intersects a set $P' \in \cQ_b'(i-1)$ that is a subset of a query $P \in \cQ_b(i-1)$, with $|P'| \ge 2$ and let $p \in P' \cap Q_i$.

\vspace{2mm}
The answer of Adversary is no, we color $p$ red and choose a ball $q \in P' \setminus \{p\}$ that we color blue, thus $Q_i$ and $P$ becomes a new member of $\cQ_0(i)$, so it is a $(2,2)$-step, and it can not be the end of $\mathtt{STRATEGY1}$.

\vspace{3mm}

\hspace{3mm}\textbf{Case 2/C:} $Q_i \cap X_b(i-1) = \emptyset, \ Q_i \cap X_d(i-1) \neq \emptyset $.

\vspace{1mm}

We know that $Q_i$ intersects a query $P \in \cQ_d(i-1)$ in a ball $p$, and we choose a ball $q \in P \setminus \{p\}$.

\vspace{2mm}
The answer of Adversary is no, we color $p$ red and $q$ blue, thus $Q_i$ and $P$ becomes a new member of $\cQ_0(i)$ (other queries in $\cQ_d(i-1)$ can move to either $\cQ_r(i)$ or $\cQ_b(i)$), so it is a $(2,2)$-step, and it can not be the end of $\mathtt{STRATEGY1}$.

\vspace{3mm}

\hspace{3mm}\textbf{Case 2/D:} $Q_i \cap X_b(i-1) = \emptyset, \ Q_i \cap X_d(i-1) = \emptyset, \ Q_i \cap X_0(i-1) \neq \emptyset $. 

\vspace{1mm}

We have two subcases in this case:

\vspace{2mm}

\hspace{6mm}\textbf{Case 2/D/a:} $|Q_i \cap X_0(i-1)| \ge 2$. 

\vspace{2mm}

The answer of Adversary is no, we do not color anything, nothing becomes a new member of $\cQ_0(i)$ ($Q_i$ becomes an element of $\mathcal{Q}_b(i)$), so it is a (0,0)-step, and it CAN be the end of $\mathtt{STRATEGY1}$.

\vspace{1mm}

If it would be the end of $\mathtt{STRATEGY1}$, then the answer of Adversary is yes, and we continue with $\mathtt{STRATEGY2}$.

\vspace{3mm}

\hspace{6mm}\textbf{Case 2/D/b:} $|Q_i \cap X_0(i-1)| =1$.

\vspace{2mm}

The answer of Adversary is yes (note that this is the only nontrivial case when Adversary answers yes), we color that ball in $Q_i \cap X_0(i-1)$ blue, thus $Q_i$ becomes a new element of $\cQ_0(i)$, so it is a (1,1)-step and it can not be the end of $\mathtt{STRATEGY1}$.

\vspace{1cm}

\textbf{Case 3:} $Q_i \cap R(i-1)= \emptyset,\ Q_i \cap B(i-1)= \emptyset, \ |Q_i \cap X_d(i-1)| \ge 2$. 

\vspace{3mm}

\hspace{3mm}\textbf{Case 3/A:} $Q_i$ intersects at least two queries in $\cQ_d(i-1)$. 

Let $P_1,P_2 \in \cQ_d(i-1)$ be these queries and we can choose $p_1 \in P_1 \cap Q_i$, $p_2 \in P_2 \cap Q_i$ with $p_1 \neq p_2$. Let us also choose $q_1 \in P_1 \setminus \{p_1, p_2\}$ and $q_2 \in P_2 \setminus \{p_1, p_2\}$ such that $q_1 \neq q_2$. We can do that since $k \ge 3$.

\vspace{2mm}

The answer of Adversary is no, we color color $p_1$, $q_2$ red and $p_2$,$q_1$ blue, thus $Q_i,P_1,P_2$ become new elements of $\cQ_0(i)$, so it a $(4,3)$-step, and it can not be the end of $\mathtt{STRATEGY1}$.

\vspace{3mm}

\hspace{3mm} \textbf{Case 3/B:} $Q_i$ intersects only one query in $\cQ_d(i-1)$. 

\vspace{1mm}

Let $P \in \cQ_d(i-1)$ be that query and choose $p \in P \cap Q_i$ and $q \in P \setminus \{p\}$ such that $P$ is the only query in $\cQ_d(i-1)$ that contains $q$ (this is possible, since $k \ge 3$).  

\vspace{2mm}

The answer of of Adversary is no, we color $p$ red, $q$ blue, thus $Q_i,P$ become new elements of $\cQ_0(i)$, so it a $(2,2)$-step, and it can not be the end of $\mathtt{STRATEGY1}$.

\vspace{1cm}

\textbf{Case 4:} $Q_i \cap R(i-1)= \emptyset, \ Q_i \cap B(i-1)= \emptyset, \ |Q_i \cap X_d(i-1)| \le 1, \ Q_i \cap X_b(i-1) \neq \emptyset$. 

\vspace{1mm}

Let $P' \in \cQ'_b(i-1)$ (a hanged out part of $P \in \cQ_b(i-1)$) for which a ball $p \in Q_i \cap P'$.

\vspace{3mm}

\hspace{3mm}\textbf{Case 4/A:} $Q_i \cap X_0(i-1) \neq \emptyset$. 

\vspace{2mm}

The answer of Adversary is no, we color $p$ red, a ball from $Q_i \cap X_0(i-1)$ blue, thus  $Q_i$ and $P$ become new elements of $\cQ_0(i)$, it is a (2,2)-step and it can not be the end of $\mathtt{STRATEGY1}$. 

\vspace{3mm}

\hspace{3mm}\textbf{Case 4/B:} $Q_i \cap X_0(i-1) = \emptyset$. 

\vspace{1mm}

By the above we know that in this case we have $|Q_i \cap X_b(i)|\ge 2$, as the size of the queries is at least 3.

\vspace{3mm}

\hspace{6mm}\textbf{Case 4/B/a:} there is a $P\in \cQ_b(i-1)$ such that $|Q_i \cap P| \ge 2$, and let $p$ and $q$ be these balls.

\vspace{2mm}

The answer of Adversary is no, we color $p$ red and $q$ blue, thus $Q_i$ and $P$ become new elements of $\cQ_0(i)$, so it is a (2,2)-step and it can not be the end of $\mathtt{STRATEGY1}$.

\vspace{3mm}

\hspace{6mm}\textbf{Case 4/B/b:} There are two $P_1,P_2 \in \cQ_b(i-1)$ such that there are balls $p_1,p_2$ with $p_1 \in P'_1 \cap Q_i$ and $p_2 \in P'_2 \cap Q_i$. 

\vspace{1mm}

In this case choose $q_1 \in P'_1 \setminus \{p_1\}$ and $q_2 \in P'_2\setminus Q_i$ (this can be done, since $|P'_1|, |P'_2| \ge 2$).

\vspace{2mm}

The answer of Adversary is no, we color $q_1$ and $p_2$ red, $p_1$ and $q_2$ blue, thus $Q_i,P_1,P_2$ become new elements of $\cQ_0(i)$, it is a (4,3)-step and it can not be the end of $\mathtt{STRATEGY1}$.

\vspace{1cm}

\textbf{Case 5:} $Q_i \cap R(i-1)= \emptyset, \ Q_i \cap B(i-1)= \emptyset, \ Q_i \cap X_b(i-1) = \emptyset, |Q_i \cap X_d(i-1)| \le 1$.

\vspace{2mm}

\hspace{3mm}\textbf{Case 5/A:} $Q_i$ intersects at least two queries in $\cQ_d(i-1)$.

\vspace{2mm}

The answer of Adversary is no, we do not color any balls, no query becomes a new element of $\cQ_0(i)$, just $Q_i$ is put into $\cQ_d(i)$, thus it is a (0,0)-step and it can not be the end of $\mathtt{STRATEGY1}$.

\vspace{3mm}

\hspace{3mm}\textbf{Case 5/B:} $Q_i$ intersects one query in $\cQ_d(i-1)$, that does not intersect any other query in $\cQ_d(i-1)$.

\vspace{2mm}

Like in the previous subcase, the answer of Adversary is no, we do not color any balls, no query becomes a new element of $\cQ_0(i)$, just $Q_i$ is put into $\cQ_d(i)$, thus it is a (0,0)-step and it can not be the end of $\mathtt{STRATEGY1}$.

\vspace{3mm}

\hspace{3mm}\textbf{Case 5/C:} $Q_i$ intersects only one query $P_1 \in \cQ_d(i-1)$, that intersects another query $P_2\in \cQ_d(i-1)$.

\vspace{1mm} In this case let $\{p_1 \} = Q_i \cap P_1$ and$\{p_2 \} = P_1 \cap P_2$. We can choose $p_3 \in Q_i \setminus (P_1 \cup P_2)$ and $p_4 \in P_2 \setminus (P_1 \cup Q_i)$.

\vspace{2mm}

The answer of Adversary is no, we color $p_1,p_4$ red and $p_2,p_3$ blue, so $Q_i, P_1,P_2$ become new elements of $\cQ_0(i)$ (other queries containing $p_2$ move from $\cQ_d(i-1)$ to $\cQ_b(i)$), thus it is a (4,3)-step and it can not be the end of $\mathtt{STRATEGY1}$.

\vspace{1cm}

\textbf{Case 6:} $Q_i \subset X_0(i-1)$.

\vspace{2mm}

The answer of Adversary is no, we do not color any ball, thus no query becomes a new member of $\cQ_0(i)$, so it is a (0,0)-step and it can not be the end of $\mathtt{STRATEGY1}$.

\vspace{1cm}

\noindent
We continue with the description of $\mathtt{STRATEGY2}$:

\vspace{1mm}

Note that the only situation, when $\mathtt{STRATEGY1}$ can end is Case 2/D/a. In that situation Adversary answers no and we could choose a majority ball, so the structure of the query hypergraph would satisfy the following:

$\bullet$ $X_b(i) \cup R(i) \cup B(i)= [n]$, 

$\bullet$ $|R(i)|=|B(i)|+1$,

$\bullet$ for all $Q \in \cQ'_b(i)$ we have $|Q|=2$.

As we mentioned, Adversary answers yes instead, so the query hypergraph (after the 'yes' answer) satisfies the following:

$\bullet$ $X_b(i) \cup R(i) \cup B(i)= [n]$, 

$\bullet$ $|R(i)|+1=|B(i)|$,

$\bullet$ for all $Q \in \cQ'_b(i)$ we have $|Q|=2$.

\vspace{3mm}

\noindent
In the following we describe $\mathtt{STRATEGY2}$ again by case-by-case analysis similar to $\mathtt{STRATEGY1}$. The properties 1,2,3 of Lemma \ref{lem:strategy2} will be easy in each case.

\vspace{2mm}

$\mathtt{STRATEGY2}$

\vspace{2mm}

\textbf{Case A:} $ Q_i \cap X_b(i-1) \neq \emptyset$.

\vspace{1mm}

Let $p \in P' \cap Q_i$ with some $P' \in \cQ'_b(i-1)$ (a hanged out part of some $P\in\cQ_b(i-1)$) and choose $q \in P' \setminus \{p\}$.

\vspace{2mm}

\hspace{3mm} \textbf{Case A/a:} $Q_i \cap R(i-1) \neq \emptyset$.

\vspace{2mm}

The answer of Adversary is no, we color $p$ blue and $q$ red, so $P$ and $Q_i$ become new elements of $\cQ_0(i)$, thus it is (2,2)-step.

\vspace{2mm}

\hspace{3mm} \textbf{Case A/b:} $Q_i \cap B(i-1)  \neq\emptyset$.

\vspace{2mm}

The answer of Adversary is no, we color $p$ red and $q$ blue, so $P$ and $Q_i$ become new elements of $\cQ_0(i-1)$, thus it is (2,2)-step.

\vspace{2mm}

\hspace{3mm} \textbf{Case A/c:} $Q_i \cap R(i-1)=Q_i \cap B(i-1) = \emptyset$.

\vspace{2mm}

In this case $Q_i$ intersects another set $P'_0$ from $\cQ'_b(i-1)$ (a hanged out part of $P_0 \in \cQ_b(i-1)$). Let $p_0 \in P'_0 \cap Q_i$ and choose $q_0 \in P'_0 \setminus \{p_0\}$.

\vspace{2mm}

The answer of Adversary is no, we color $p$ and $q_0$ red, $q$ and $p_0$ blue, so $Q_i, P, P_0$ become new elements of $\cQ_0(i)$, thus it is a (4,3)-step.

\vspace{3mm}

\textbf{Case B:} $Q_i \cap X_b(i) = \emptyset$.

\vspace{2mm}

\hspace{3mm} \textbf{Case B/a:} $Q_i \subset R(i)$.

\vspace{2mm}

The answer of Adversary yes, we do not color any new ball, $Q_i$ becomes a new element of $\cQ_0(i)$, thus it is a (0,1)-step.

\vspace{3mm}

\hspace{3mm} \textbf{Case B/b:} $Q_i \subset B(i)$.

\vspace{2mm}

The answer of Adversary yes, we do not color any new ball, $Q_i$ becomes a new element of $\cQ_0(i)$, thus it is a (0,1)-step.

\vspace{3mm}

\hspace{3mm} \textbf{Case B/c:} $Q_i \cap R(i) \neq \emptyset$, $Q_i \cap B(i) \neq \emptyset$.

\vspace{2mm}

The answer of Adversary no, we do not color any new ball, $Q_i$ becomes a new element of $\cQ_0(i)$, thus it is a (0,1)-step.

\vspace{3mm}

\end{proof}

\section{Proof of Theorem \ref{acm}}

Now we describe Adversary's strategy, that will contain the answer and the color of some balls as additional information.
Let us denote by $R(i)$ (resp. $B(i))$ the set of balls that are known to be red (resp. blue) after the $i^{th}$ round, and let $g(i):=||R(i)|-|B(i)||$ be their difference. Let $x:=\lfloor (k-1)/3 \rfloor$. At any time during the algorithm we say that a query is \textit{open} if it contains more than $k-x$ balls of degree one (in the query hypergraph till that point), and \textit{closed} otherwise. 

In each round Adversary tells the color of the balls of degree at least two, and he tells the color of all the balls in closed queries (i.e. in queries that became closed in that round). Note that a query $Q$ can also become closed in a later round, when a query $Q'$ is asked; at this point Adversary gives the color of all the balls in $Q$, even the ones not in $Q'$. For an open query $Q$ we know the color of some of its balls and also the answer to $Q$, i.e. we know that there are two possible numbers of blue balls among the uncolored balls in $Q$ (which are the balls of degree one in $Q$).

When the $i^{th}$ query $Q$ is asked, it might contain some balls already colored (those in $R(i-1)\cup B(i-1)$). It might also contain some balls that previously had degree one. Adversary will color these balls, so first we describe, how:

\vspace{2mm}

$\bullet$ If there are more than $k-x$ balls that appear in a query the first time, i.e. $Q$ is open, the answer is always $x$. The Adversary does not color the balls that will have degree one after this step. Note that balls that are not colored before the $i^{th}$ round and have degree two after this step come from previously open queries. For each such query $Q_0$, Adversary colors balls in $Q\cap Q_0$ in such a way that the coloring is consistent to the answer to $Q_0$ (which was $x$). If $Q_0$ remains open, any coloring satisfies that, Adversary chooses one that minimizes $g(i)$. If $Q_0$ becomes closed, then Adversary has to color all the balls in $Q_0$, thus choose one of two possibilities: either there are $x$ blue or $x$ red balls in $Q_0$. Both are possible, as before the $i^{th}$ round there were less than $x$ blue and less than $x$ red balls in $Q_0$ (as there were less than $x$ colored balls altogether). It does not matter which balls in $Q_0$ become red and which ones become blue in this round. Thus indeed there are two possibilities, Adversary chooses one that minimizes $g(i)$.

\vspace{2mm}

$\bullet$ If the query $Q$ is closed then Adversary will color all the balls in it. Similarly to the previous case first he colors the balls that appeared in other queries and the balls in queries that become closed. Moreover, he colors them the same way. Then he has to color the remaining balls in $Q$; he chooses a coloring that minimizes $g(i)$, and finally he answers the number that this coloring gives as the answer to $Q$.

\vspace{2mm}


We claim that at any point $g(i)\le k-2x$. Indeed, there is only one case when colors are picked and the goal is not only making $g(i)$ as small as possible. It is when an earlier query $Q_0$ becomes closed, and there are two possible choices for Adversary: after the coloring there are either $x$ red and $k-x$ blue balls or $k-x$ red and $x$ blue balls in $Q_0$. No matter how many red and blue balls are already in $Q_0$, the difference between the number of new red balls is $k-2x$. Another important observation is that one choice colors blue more than half of the newly colored balls, while the other choice colors red more than half of the newly colored balls, but their difference is at most $k-2x$. Thus if earlier there were more blue balls, or the same as red balls, Adversary colors more red, thus either decreases the difference, or pushes it into the other direction, but by no more than $k-2x$. In other cases the colors are picked without any restriction, with the goal of making $g(i)$ as small as possible, thus it cannot increase (except from 0 to 1).

Now let us assume that the algorithm has finished and let $A$ be the set of balls not appearing in any queries, let $B$ be the set of balls that have degree one in the query hypergraph (i.e. appeared in an open query), and let $C$ be the set of the remaining balls. Note that we know the colors of the balls in $C$.

Consider an open query $Q$, and let $Q_0: =Q\cap B$ (i.e. those balls in $Q$ that remained degree one). The answer for $Q$ was $x$. The way we chose $x$ and the definition of an open query shows that both colors appear in $Q_0$, and we do not know which balls are red, thus we cannot claim that a ball in $Q_0$ is a majority ball. It is also easy to see that no matter what colors the balls not in $Q_0$ have, it is consistent with the answers that $Q$ contains $x$ red balls but also that $Q$ contains $x$ blue (and $k-x$ red) balls. It means that inside $Q_0$, independently of the color of other balls, there are two possible colorings, where the difference between the color classes is at least $k-3x$, and in one of the colorings there are more blue balls, in the other coloring there are more red balls.

So we cannot claim that a ball in $B$ (or in $A$) is a majority ball. Now we show that there are at most $(k-2x-|A|)/(k-3x)$ open queries. If we could claim that there is no majority color, then even one open query would lead to contradiction, as we could change the number of blue balls among the balls of degree one in it. Assume we claim that a ball $i \in C$ is a majority ball. We know the color of $i$, say red. We also know that there are at most $k-2x$ more red than blue balls in $C$. If there are more than $(k-2x-|A|)/(k-3x)$ open queries, then blue can be the majority (if there are $x$ red balls in each of those queries and $A$ is blue), a contradiction. 

We know that $A$ and $B$ contains together at most $$k(k-2x-|A|)/(k-3x)+|A| \le k(k-2x)/(k-3x)$$ balls. The remaining $n'=n-k(k-2x)/(k-3x)$ balls must be covered by queries that contain at most $k-x$ balls of degree one. A simple computation shows that at least $2n'/(2k-x)$ queries are needed for that, which easily implies the lower bound.
\qed

\vspace{3mm}

\section{Concluding remarks}

If we replace property 6 of Lemma \ref{lem:strategy1} by the stronger condition that queries in $\cQ_d(i)$ are disjoint from each other, a similar but simpler case analysis shows that $A(GM,k,n) \ge 2n/3$. It seems plausible that if we instead replaced it with a weaker condition, allowing a richer structure, a similar but more complicated case analysis would give a stronger lower bound. We conjecture $A(GM,k,n)=(1-o(n))n$.

Another interesting question is what happens if we allow queries of different sizes. Can it help the questioner if he can pick $k$ at any point? It is especially interesting in the general model, where the length of the algorithm does not seem to depend significantly on $k$. Our lower bound still holds in this more general version.

Instead of finding a majority ball, a reasonable goal would be to find the two color classes. In fact, our algorithm for the General Model identifies the color classes. On the other hand it is easy to see that at least $n-1$ queries are needed to solve the harder problem.
For $k=2$, in the pairing model, $n-b(n)$ queries are needed to find a majority ball, as mentioned in the introduction, while it is easy to see that $n-1$ queries are needed to find the color classes. It seems that the difference is typically so small that is becomes relevant only when the upper and lower bounds are close enough. However, the adaptive algorithm for the Counting Model by Eppstein and Hirschberg \cite{EH2015} does not find the color classes. It still finds the sizes of the color classes though.

Another variant of these problems is when all the queries are fixed at the beginning, it is called the \textit{non-adaptive} version. In a forthcoming paper \cite{gv16} we investigate the non-adaptive versions of these and other models of the majority problem.

\section*{Acknowledgement}

We would like to thank the hospitality of Moscow Institute of Physics and Technology, where this work started and all participants of the Combinatorial Search Seminar at the Alfr\'ed R\'enyi Institute of Mathematics for fruitful discussions.

We also thank the reviewer for his/her comments that significantly improved our manuscript.

\end{document}